\documentclass[10pt,bezier]{article}
 \usepackage{relsize}
\usepackage{amsmath,amssymb,amsfonts,euscript,graphicx,tikz,tikz-3dplot,amssymb}

\usepackage{tikz,amsmath,tkz-berge}
\usepackage{caption}
\usetikzlibrary{positioning,arrows,calc,decorations.pathmorphing}
\usetikzlibrary{decorations.markings}
\usetikzlibrary{decorations.pathmorphing}

\tikzstyle{vertex}=[circle, fill, draw, inner sep=0pt, minimum size=6pt]

\tdplotsetmaincoords{70}{110}

\textwidth = 15 cm \textheight = 20 cm \oddsidemargin =0.7 cm
\evensidemargin = -3 cm \topmargin = 1 cm
\parskip = 2 mm

\newtheorem{prethm}{{\bf Theorem}}

\newenvironment{thm}{\begin{prethm}{\hspace{-0.5
               em}{\bf.}}}{\end{prethm}}

\newtheorem{prepro}[prethm]{{\bf Theorem}}

\newtheorem{preprop}[prethm]{{\bf Proposition}}

\newtheorem{precor}[prethm]{{\bf Corollary}}

\newenvironment{cor}{\begin{precor}{\hspace{-0.5
               em}{\bf.}}}{\end{precor}}

\newtheorem{preconj}[prethm]{{\bf Conjecture}}

\newtheorem{preremark}[prethm]{{\bf Remark}} 

\newenvironment{remark}{\begin{preremark}\rm{\hspace{-0.5
               em}{\bf.}}}{\end{preremark}}

\newtheorem{preexample}[prethm]{{\bf Example}}

\newtheorem{prelem}[prethm]{{\bf Lemma}}

\newenvironment{lem}{\begin{prelem}{\hspace{-0.5
               em}{\bf.}}}{\end{prelem}}

\newtheorem{prelam}{{\bf Lemma}}

\newtheorem{preproof}{{\bf Proof.}}

\newenvironment{proof}[1]{\begin{preproof}{\rm
               #1}\hfill{$\Box$}}{\end{preproof}}


\title{\bf \large  Algebraic Flow Theory of Infinite Graphs
\thanks
{{\it Key Words}:  Contraction, Flow, Infinite graph.}
\thanks {2010{ \it Mathematics Subject Classification}: 05C21, 05C63, 22A05.
 }}

\author{{\normalsize {\sc~${}^{\mathsf{}}$}  {\sc B. Miraftab${}^{\mathsf{a}}$}, {\sc M.J. Moghadamzadeh${}^{\mathsf{b}}$}} \\
 {\footnotesize{${}^{\mathsf{}}$}}\\
 {\footnotesize{${}^{\mathsf{a}}$\it Fachbereich Mathematik, Universit$\ddot{a}$t Hamburg, Bundesstra$\ss$e~$55$,~$20146$ Hamburg, Germany}}\\
{\footnotesize{${}^{\mathsf{b}}$\it Department of Mathematical
 Sciences, Sharif
University of Technology, Tehran, Iran}}\\
{\footnotesize{${}^{\mathsf{}}$ \it {\rm P.O. Box 64615-334},}}\\
{\footnotesize{}}\\
{\footnotesize{
$\mathsf{}$\quad\quad$\mathsf{babak.miraftab@uni\textendash hamburg.de}$}}\\
{\footnotesize{\quad\quad$\mathsf{javad\_mz123@yahoo.com}$}}}

\date{}
\begin{document}
\maketitle
\begin{abstract}
{\small A problem by Diestel is to  extend algebraic flow theory of finite graphs to infinite graphs with ends. In order to pursue this problem, we define an~$A$-flow and non-elusive~$H$-flow for arbitrary graphs and for  abelian Hausdorff topological groups ~$H$ and compact subsets~$A\subseteq H$. We use these new definitions to extend several well-known theorems  of flows in finite graphs to infinite graphs.}
\end{abstract}

\section{Introduction}
{The concept of flow  is a main  topic in graph theory and has various applications, as e.g.\ in  electric networks. Algebraic flow theory for finite graphs is well studied, see \cite{Jaeger1, Jaeger2, lov, sey, thom}.
But when it comes to infinite graphs, much less is known. 
There are some results for electrical networks, see \cite{Aharoni, diestel, sur, Geo}, but not for group-valued flows. 
In fact Diestel's problem \cite[Problem 4.27]{sur} to extend flow theory to infinite graphs is still widely open. 
Here we are doing a first step towards its solution.

\noindent In Section 2, we give our main definition for flows in infinite graphs. 
Roughly speaking, a flow is a map from the edge set of a graph to an abelian  Hausdorff topological group  such that the sum over all edges in each finite cut is trivial. 
With this in mind,  we shall extend the following theorems of finite graphs:
\begin{itemize}
\item A finite graph has a non-elusive~$\mathbb Z_2$-flow if and only if its degrees are even.

\item A finite cubic graph has a non-elusive~$\mathbb Z_4$-flow if and only if it is~$3$-edge-colorable.

\item  Every finite graph containing a Hamilton cycle has a non-elusive~$\mathbb Z_4$-flow.
\end{itemize}
\noindent Our main tool to prove these results is Theorem \ref{compact}, which offers some kind of compactness method to extend results for finite graphs to infinite graphs of arbitrary degree, i.e.\ that need not be locally finite.
\noindent However it is worth remarking that not all theorems about flows in finite graphs have a straightforward analogue in the infinite case:
a  finite cubic graph~$G$ has a non-elusive~$\mathbb Z_3$-flow if and only if~$G$ is bipartite, see \cite[Proposition 6.4.2]{diestel}. 
This is wrong for infinite graphs. Figure 1.1 shows a cubic bipartite graph without any non-elusive~$\mathbb Z_3$-flow.
Even further restrictions on the ends of that graph, e.g.\ requiring them to have edge- or vertex-degree~$3$, fails in our example. 
(For more about the ends of a graph and the topological setting, we refer readers to \cite{sur} and the references therein.)
}

\begin{center}
\begin{tikzpicture}
\draw[<->,>=latex'] (-0.5,1)--(8.5,1);
\draw[<->,>=latex'] (0,0)--(9,0);
\draw[<->,>=latex'] (0.5,-1)--(9.5,-1);

\draw (10,0) node[circle, draw, fill,inner sep=1pt](vv){}node[above=1pt]{$\omega_1$};
\draw (-1,0) node[circle, draw, fill,inner sep=1pt](vv){}node[above=1pt]{$\omega_2$};

\foreach \i in {1,2,...,8}{
\draw (\i,1) node [circle,fill, inner sep=2pt] {};
\draw (\i,0) node [circle,fill, inner sep=2pt] {};
\draw (\i,-1) node [circle,fill, inner sep=2pt] {};
\draw (0,1) node [circle,fill, inner sep=2pt] {};
\draw (9,-1) node [circle,fill, inner sep=2pt] {};

\draw (1,1)--(1,0) ;
\draw (3,1)--(3,0);
\draw (5,1)--(5,0);
\draw (7,1)--(7,0);
\draw (2,0)--(2,-1);
\draw (4,0)--(4,-1);
\draw (6,0)--(6,-1);
\draw (8,0)--(8,-1);
}
\foreach \i in {0,2,4,6,8}{
\draw (\i,1) edge[out=180,in=90] (\i+1,-1);
}
\begin{scope}[shift={(0.5,0.5)}]
\end{scope}
\end{tikzpicture}
\end{center}
\begin{center}
Figure 1.1
\end{center}
In the Section~$4$, we define the concept of tension for infinite graphs. 
Heuristically, a tension is a map from the edge set of a graph to an abelian Hausdorff topological group such that the sum over all edges in each finite cycle is trivial.

\section{Preliminaries}

We refer readers to \cite{diestel}, for the standard terminology and notations in this paper. 
A~$1$-way infinite path is called a \textit{ray}, a~$2$-way infinite path is a \textit{double ray}, and the subrays of a ray or double ray are its tails. Two rays in a graph~$G = (V,E)$ are equivalent if no finite set of vertices separates them. 
This is an equivalence relation whose classes are the \textit{ends} of~$G$. 
Now, consider a locally finite graph~$G$ as one-dimensional CW complex and  compactify~$G$ by using the Freudenthal compactification method. 
We denote this new topological space by~$|G|$, for more on~$|G|$, see \cite{diestel1} and \cite{sur}. 
Let $D$ be a subset of edges of $G$. 
We denote the closure of the point set $\cup_{d\in D} d$ in $|G|$ by $\overline{D}$.
A \textit{circle} in~$|G|$ is a homeomorphic image of the unit circle~$S^1$.
Analogously an \textit{arc} in~$|G|$ is a homeomorphic image of the closed interval $[0,1]$.
We denote the  \textit{cut space}, \textit{finite cut space}, \textit{topological cycle space} and \textit{finite cycle space} of a graph~$G$ by~$\mathcal{B}(G)$,~$\mathcal{B}_{\rm{fin}}(G)$,~$\mathcal{C}(G)$ and~$\mathcal{C}_{\rm{fin}}(G)$, respectively. 
For more details about the equivalent definitions of topological cycle space and its properties, see \cite{diestel,sur}. Note that~$\mathcal{B}(G)$ is a vector space over~$\mathbb Z_2$. 
We now define the degree of an end of the graph $G$.
The~$\textit{edge-degree}$ of an end $\omega$ is the maximum number of edge-disjoint  rays in $\omega$.
In addition, let $D$ be a subset of the set of edges of $G$.
Then we say that an end $\omega$ is \textit{$D$-even} if there exists a finite vertex set $S$ so
that for all finite vertex sets $S'\supseteq S$ it holds that the maximal number of  edge-disjoint  arcs from $S'$ to $\omega$ contained in $\overline{D}$ is even. 
If $D$ is all the edges of $G$, we remove $D$  from the notation and we only say that $\omega$ has an even edge-degree. 
For more about the degree of ends, see \cite{Bruhn, Bruhn2}.
\noindent The following theorem describes the elements of the cycle space for 
locally finite graphs.
For the proof, see {\rm{\cite[Theorem 8.5.10]{diestel}}} and {\rm{\cite[Theorem 5]{Berger}}}.
\begin{thm}\label{cycle space} 
Let~$G=(V,E)$ be a locally finite connected graph.
Then an edge set $D\subseteq E$ lies in $\mathcal{C}(G)$ if and only if one of the following equivalent statements holds
 \begin{enumerate}
\item[{\rm (i)}] $D$ meets every finite cut in an even number of edges.
\item[{\rm (ii)}] Every vertex and every end of $G$ is $D$-even.
 \end{enumerate}
 \end{thm}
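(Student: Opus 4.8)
The plan is to lean on two standard facts about a locally finite connected graph $G$: the duality $\mathcal C(G)=\mathcal B_{\mathrm{fin}}(G)^{\perp}$ of \cite{diestel} — which, as the finite cuts span $\mathcal B_{\mathrm{fin}}(G)$, is precisely the equivalence ``$D\in\mathcal C(G)\iff$ (i)'' — together with its companion, that $D\in\mathcal C(G)$ iff $D$ is an edge-disjoint union of circuits; and Menger's theorem for infinite graphs, used in the form that for a finite vertex set $S'$ the maximum number $a_{D}(S',\omega)$ of edge-disjoint arcs from $S'$ to $\omega$ inside $\overline D$ equals the minimum number of $D$-edges meeting all such arcs. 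It then remains to prove (i)$\iff$(ii). Two preliminary observations feed into this: a single circuit meets every finite cut $F$ evenly, since traversing the circle one changes between the two sides of $F$ exactly at the $F$-edges and returns to the starting point (here one uses that a finite cut splits $V$ \emph{and} all ends of $G$ into two sides); and $a_{D}(S',\omega)\le\sum_{v\in S'}\deg_{D}(v)<\infty$, with $a_{D}(S',\omega)$ monotone in $S'$ and supremum equal to the $D$-edge-degree of $\omega$.

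\noindent\emph{(i)$\Rightarrow$(ii).} Each star $E_{G}(v)$ is a finite cut, so $\deg_{D}(v)=|D\cap E_{G}(v)|$ is even and every vertex is $D$-even. Fix an end $\omega$, a locally finite exhaustion $V_{1}\subseteq V_{2}\subseteq\cdots$ of $V$, the component $C_{n}$ of $G-V_{n}$ with $\omega\in\overline{C_{n}}$, and the finite cut $F_{n}:=E_{G}(C_{n},V_{n})$, so each $|D\cap F_{n}|$ is even. If the $D$-edge-degree of $\omega$ is finite, choosing the exhaustion so as to realise it makes $|D\cap F_{n}|$ equal to that degree for all large $n$, hence even; since $a_{D}(S',\omega)$ increases to this value it equals it, so is even, for all large finite $S'$, and $\omega$ is $D$-even. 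If the $D$-edge-degree is infinite, one shows instead that for every finite $S'$ a \emph{minimum} set $B$ of $D$-edges separating $S'$ from $\omega$ has even size; then $a_{D}(S',\omega)=|B|$ is even, so $\omega$ is again $D$-even with $S=\varnothing$. The evenness of $|B|$ is the circuit-crossing argument flagged below; when $G$ is one-ended it is easy, because then every component of the $S'$-side is finite, so $E_{G}(\text{$S'$-side},\text{$\omega$-side})$ is a finite cut and is met evenly by $D$ by (i).

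\noindent\emph{(ii)$\Rightarrow$(i).} By the vertex clause of (ii) every $\deg_{D}(v)$ is even. Let $F=E_{G}(U,V\setminus U)$ be a finite cut. If $U$ or $V\setminus U$ is finite, summing $D$-degrees over that side gives $|D\cap F|$ even. If both sides are infinite, consider $H:=\bigl(U,\;D\cap E(G[U])\bigr)$: each vertex of $H$ has even degree except the finitely many endpoints in $U$ of edges of $D\cap F$, where $\deg_{H}(v)\equiv|D\cap F\cap E_{G}(v)|\pmod 2$; so $H$ has finitely many odd-degree vertices, and their number is congruent to $|D\cap F|$ modulo $2$. Pairing odd-degree vertices of $H$ along trails of $H$, those left unpaired are the starting vertices of rays of $D$ running to ends of $G$ on the $U$-side (by finiteness of $F$ these are exactly the ends of $G[U]$); the $D$-evenness of those finitely many relevant ends, re-expressed through Menger, forces the number of unpaired vertices, hence $|D\cap F|$, to be even.

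\noindent The step I expect to be the main obstacle is the end analysis — above all an end of infinite $D$-edge-degree together with a finite cut having two infinite sides. There, handshaking is unavailable and one is reduced to following a circle (respectively a trail of $D$) across the relevant finite edge set and controlling the parity of the number of crossings. The subtlety is that along a circle the side with respect to the induced vertex partition can change not only at a cut-edge but, a priori, also at an end lying on the circle; making the argument rigorous amounts to showing that these ``end'' side-changes cancel in the total, which is where the edge-disjoint circuit structure of elements of $\mathcal C(G)$, the finiteness of the separating edge set, and Menger's theorem must be combined. I expect essentially all of the genuine work to be concentrated here.
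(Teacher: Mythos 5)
The paper does not prove this theorem at all: it is quoted verbatim from \cite[Theorem 8.5.10]{diestel} (for the equivalence of membership in $\mathcal C(G)$ with (i)) and \cite[Theorem 5]{Berger} (for the equivalence with (ii)), so there is no in-paper argument to compare against. Your outline is a faithful roadmap of those two cited proofs: orthogonality of $\mathcal C(G)$ and $\mathcal B_{\mathrm{fin}}(G)$ for the first equivalence, and the Berger--Bruhn end-parity analysis via Menger for the second. The easy portions are correct as stated (stars are finite cuts, so (i) gives even vertex degrees; the handshake argument when one side of the cut is finite; the monotonicity and finiteness of $a_D(S',\omega)$ for finite $S'$; the exhaustion argument when the $D$-edge-degree of $\omega$ is finite).

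However, the two steps you flag as ``the main obstacle'' are not deferrable details --- they are the entire content of \cite[Theorem 5]{Berger}, and as written they are genuine gaps. First, in (i)$\Rightarrow$(ii) for an end of infinite $D$-edge-degree, a minimum set $B$ of $D$-edges separating $S'$ from $\omega$ inside $\overline D$ is in general \emph{not} of the form $D\cap F$ for a finite cut $F$ of $G$: deleting $B$ from $\overline D$ separates $S'$ from $\omega$ in $\overline D$, but the induced vertex partition need not be a cut of $G$ met by $D$ only in $B$, since other ends and edges of $E\setminus D$ can join the two sides; so (i) cannot be invoked on $B$ directly, and your one-ended special case does not extend. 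Second, in (ii)$\Rightarrow$(i) with both sides of $F$ infinite, ``pairing odd-degree vertices of $H$ along trails, with the unpaired ones starting rays to ends'' presupposes a decomposition of the locally finite graph $H=(U,D\cap E(G[U]))$ into finitely many trails and rays matched to ends --- which is essentially the Eulerian-decomposition statement one is trying to prove, and is exactly what Berger and Bruhn establish by a careful analysis of how arcs and circles in $|G|$ cross a finite edge set near ends. So the proposal identifies the correct strategy and correctly locates the difficulty, but the core of the proof is absent; for the purposes of this paper the honest course is to do what the authors do and cite \cite{diestel} and \cite{Berger}.
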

 
\noindent Let us review some notions of the compactness method for locally finite graphs. Suppose that~$v_0,v_1,\ldots$ is an enumeration of $V$.
We define~$S_n = {v_0,\ldots,v_n}$, for every~$n\in\mathbb N$.
Put $G_n$ for the minor of $G$ obtained by contracting  each component of $G\setminus S_n$ to a vertex.
Note that we delete any loop, but we keep multiple edges. The vertices of $G_n$ outside $S_n$ are called
\textit{dummy vertices} of $G_n$.
\noindent Let~$G=(V,E)$ be a graph. A \textit{directed edge} is an ordered triple~$(e,x,y)$, where~$e=xy\in E$. So we can present each edge according to its direction by~$\overrightarrow{e}=(e,x,y)$ or~$\overleftarrow{e}=(e,y,x)$. We use~$\overrightarrow{E}$ for the set of all oriented edges of~$G$. 
For two subsets $X,Y$ (not necessarily disjoint) of $V$ and a subset $\overrightarrow{C}$ of $\overrightarrow{E}$, we define 
$$\overrightarrow{C}(X,Y):=\{(e,x,y)\in\overrightarrow{C}\mid x\in X, y\in Y,x\neq y  \}.$$
It is worth mentioning that we can express every finite cut of our graph by a pair $(X,Y)$, where $X$ and $Y=V\setminus X$ are two subsets of the vertices.
Thus for every finite cut $(X,Y)$, we have an oriented cut $\overrightarrow{E}(X,Y)$.
The set $\overrightarrow{\mathcal{B}_{\rm{fin}}}(G)$ denotes the set of all oriented finite cuts  i.e. $\overrightarrow{\mathcal{B}_{\rm{fin}}}(G)=\{\overrightarrow{E}(A,B)\mid (A,B)\in \mathcal{B}_{\rm{fin}}(G)\}$.
Let $H$ be an abelian group(not necessarily finite).
Then we denote all maps from~$\overrightarrow{E}$ to~$H$ such that $f(\overrightarrow{e})=-f(\overleftarrow{e})$ for every non-loop $\overrightarrow{e}\in \overrightarrow{E}$   by~$H^{\overrightarrow{E}}$ and we introduce the following notation only for $\overrightarrow{E}(A,B) \in \overrightarrow{\mathcal{B}_{\rm{fin}}}(G)$ $$f(A,B):=\sum_{\overrightarrow{e}\in \overrightarrow{E}(A,B)} f(\overrightarrow{e}).$$
Also $H^{\overrightarrow{\mathcal{B}_{\rm{fin}}}(G)}$ denotes all maps from ${\mathcal{B}_{\rm{fin}}(G)}$ to $H$ such that $f(A,B)=-f(B,A)$ for every $\overrightarrow{E}(A,B)\in \overrightarrow{\mathcal{B}_{\rm{fin}}}(G)$.
Let us review the definition of group-valued flows for finite graphs\footnote{Our approach is due to \cite{diestel}.}.
A nowhere-zero $H$-flow of the graph $G$ is a map $f\in H^{\overrightarrow{E}}$ with the following properties:
\begin{itemize}
\item[C1:] $f(\overrightarrow{e})\neq 0$, for every $\overrightarrow{e}\in\overrightarrow{E}$.
\item[C2:] $f(\{v\},V)=0$ for all vertices $v$ of $V$.\footnote{This condition is known as the Kirchhoff's law.}
\end{itemize}
A drawback of the above definition is that it depends on degrees of vertices.
So it is meaningless whenever our graph has a vertex with  infinite degree.
To concoct this definition, we switch  every vertex with every oriented cut of our graph in the condition C2 which means $f(A,B)=0$ for all finite cuts $(A,B)$.
More precisely we have the following definition:\\

\noindent {\bf{Definition 1:}} Let~$H$ be an abelian Hausdorff topological group and let~$A$ be a compact subset of~$ H$. 
We define~$\sigma \colon H^{\overrightarrow{E}}\to H^{\overrightarrow{\mathcal{B}_{\rm{fin}}}(G)}$ such that~$$f(X,Y)=\sum_{\overrightarrow{e}\in \overrightarrow{E}(X,Y)} f(\overrightarrow{e})$$ for any finite oriented cut~$\overrightarrow{E}(X,Y)$. 
Let~$M$ be a subset of~$\overrightarrow{{\mathcal{B}_{\rm{fin}}}}(G)$. 
Then we say that~$G$ has an \textit{$A$-flow} with respect to~$M$  if~$F_M=\{f\in A^{\overrightarrow{E}}\,|\,\sigma(f)(\overrightarrow{E}(X,Y))=0$ for every~$\overrightarrow{E}(X,Y)\in M\}$ is not empty and we say that~$G$ has an \textit{$A$-flow} if~$G$ has an \textit{$A$-flow} with respect to~$\overrightarrow{\mathcal{{B}_{\rm{fin}}}}(G)$. 
If $f$ is an $A$-flow and $A\subseteq H\setminus \{0\}$, then we also call $f$ a \emph{non-elusive $H$-flow}.

\noindent {\bf{Definition 2:}} With the above notation, suppose that $G$ has an $A$-flow, where  $H=\mathbb Z$ with the discrete topology and $A=\{-(k-1),\ldots,k-1\}\setminus\{0\}$. 
Then we say that~$G$ has a \textit{$k$-flow}.
 
\noindent If a graph~$G$ has more than one component, then~$G$ has an~$A$-flow if and only if each of its components does. That is why we restrict ourselves to connected graphs from now on. So let~$G$ be a connected graph.\\
\noindent It is worth mentioning that if~$G$ is locally finite, then this definition coincides with the one in Section~$4.3$ of \cite{sur} for abelian groups. If the graph~$G$ is locally finite, then using the compactness method, we can  generalize almost all theorems of finite flow theory to infinite. 

\noindent {\bf{Definition 3:}} Let~$M=\{C_{1},\ldots,C_{t}\}$ be a finite subset of $\mathcal{B}_{\rm{fin}}(G)$.
Then we define a multigraph~$G_M$ according to~$M$. Each cut~$C_{i}\in M$ belongs to a bipartition~$(A_i,B_i)$ of~$V$  such that~$C_i$ are the~$A_i-B_i$ edges. 
The vertices of~$G_M$ are the words $X_{1}\cdots X_{t}$, where~$X_i\in\{A_i,B_i\}$ for~$i=1,\ldots,t$ in such a way that~$\cap_{i=1}^{t} X_{i}\neq\varnothing$. 
Between two vertices~$X_{1}\cdots X_{t}$ and~$X^{\prime}_{1}\cdots X^{\prime}_{t}$ of~$G_M$, there is an edge for each edge  between~$\bigcap_{i=1}^t X_{i}$ and~$\bigcap_{i=1}^t X'_{i}$. 
We say that~$G_M$ is obtained from~$G$ by \emph{contracting} with respect to~$M$. 

\begin{remark}\label{withoutloops}
Let $G$ be an infinite graph and let $M$ be a finite subset of $\mathcal{B}_{\rm{fin}}(G)$.
Then throughout this paper we always first consider $G_M$ without its loops and then we apply the corresponding result for finite multigraphs.
Now we  extend the flow in an arbitrary way to the loops.
This is possible, as no cut of $G$ contains  a loop and so the assignments of loops do not influence whether our function is a flow or not.
\end{remark}

\begin{remark}\label{phi} The definition of~$G_M$ leads to a map~$\phi\colon G\rightarrow G_M$, where every vertex~$u$ of~$G$ is mapped to a unique word~$V_u\in V(G_M)$, it is contained in. 
Indeed,  looking at each finite cut in~$M$, we can construct the unique word~$X_{_1}\cdots X_{t}$ 
in such a way that every~$X_{i}$ contains~$u$, for each~$i\in\{1,\ldots,t\}$ and so~$u\in \bigcap_{i=1}^t X_i$. 
We notice that each edge of~$G$ induces an edge of~$G_M$. Indeed, it is not hard to see that~$\phi$ defines a bijective map on the set of edges. Also, it is worth mentioning that~$\phi^{-1}(U_1)\cap\phi^{-1}(U_2)=\varnothing$ for every two vertices~$U_1$ and~$U_2$ of~$V(G_M)$. Thus the vertex set of~$G_M$ is a partition of~$V$. 
\end{remark}
Our compactness method is more general than the ordinary compactness method for locally finite graphs as mentioned above. When the graph~$G$ is locally finite, for each~$G_n$\footnote{For definition of $G_n$, see Preliminaries.}, we can choose a suitable subset~$M$ of the set of finite cuts such that~$G_M$ coincides with~$G_n$.


\section{Flows on Infinite Graphs}

\noindent  First, we start with the following lemma.
\begin{lem}\label{cut}
  Let~$G$ be a graph and~$M$ be a finite subset of~$\mathcal{B}_{\rm{fin}}(G)$. 
  Then we have~$M\subseteq\mathcal{B}(G_M)\subseteq \mathcal{B}_{\rm{fin}}(G)$.
\end{lem}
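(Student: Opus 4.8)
The plan is to transport cuts between $G$ and the finite multigraph $G_M$ using the edge-bijection $\phi$ of Remark~\ref{phi}, identifying $E(G)$ with $E(G_M)$ so that $M$, $\mathcal{B}(G_M)$ and $\mathcal{B}_{\rm{fin}}(G)$ all live inside the same collection of subsets of $E(G)$. For a vertex $w=X_1\cdots X_t$ of $G_M$ I would write $\widehat w:=\bigcap_{i=1}^t X_i\subseteq V(G)$; by Remark~\ref{phi} the sets $\widehat w$ form a partition of $V(G)$, and there are at most $2^t$ of them, so $G_M$ has finitely many vertices. If $w\neq w'$, their words differ in some coordinate $j$, and every edge of $G$ between $\widehat w$ and $\widehat{w'}$ runs between $A_j$ and $B_j$, hence lies in the finite cut $C_j$; since there are only finitely many pairs of vertices, $G_M$ has only finitely many non-loop edges. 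Loops play no role for cuts and are removed anyway by Remark~\ref{withoutloops}, so for our purposes $G_M$ is an ordinary finite multigraph and $\mathcal{B}(G_M)$ is its usual cut space, i.e.\ the family of all edge cuts $E_{G_M}(\mathcal S,V(G_M)\setminus\mathcal S)$.

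For the inclusion $M\subseteq\mathcal{B}(G_M)$ I would fix $C_j\in M$ with its bipartition $(A_j,B_j)$ and split $V(G_M)$ into the set $\mathcal A_j$ of words whose $j$-th letter is $A_j$ and the set $\mathcal B_j$ of words whose $j$-th letter is $B_j$. Since the sets $\widehat w$ partition $V(G)$ and a vertex lies in $A_j$ exactly when the $j$-th letter of its word is $A_j$, a direct check shows that under $\phi$ an edge $uv$ of $G$ lies in $C_j$ if and only if its image in $G_M$ joins a word in $\mathcal A_j$ to a word in $\mathcal B_j$; that is, $C_j=E_{G_M}(\mathcal A_j,\mathcal B_j)\in\mathcal{B}(G_M)$.

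For the inclusion $\mathcal{B}(G_M)\subseteq\mathcal{B}_{\rm{fin}}(G)$ I would take $D=E_{G_M}(\mathcal S,\mathcal T)$ for a bipartition $(\mathcal S,\mathcal T)$ of $V(G_M)$ and set $S:=\bigcup_{w\in\mathcal S}\widehat w$, so that $V(G)\setminus S=\bigcup_{w\in\mathcal T}\widehat w$. The same incidence bookkeeping as above shows that under $\phi$ we have $D=E_G(S,V(G)\setminus S)$, which is a cut of $G$; and it is finite because it consists of non-loop edges of $G_M$, of which there are only finitely many. Hence $D\in\mathcal{B}_{\rm{fin}}(G)$.

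The only genuinely delicate point is this last finiteness: a cut of $G_M$ must pull back to a \emph{finite} cut of $G$ even though $G$ and the parts $\widehat w$ may themselves be infinite. This is exactly where the hypotheses on $M$ are used --- each non-loop edge of $G_M$ is charged to one of the cuts $C_i\in M$, and $M$ is a finite set of finite cuts --- so bounding the number of non-loop edges of $G_M$ is the heart of the argument; everything else is a routine unwinding of the definition of $G_M$ and the properties of $\phi$ recorded in Remarks~\ref{phi} and~\ref{withoutloops}.
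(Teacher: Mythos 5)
Your proof is correct and follows essentially the same route as the paper's: the inclusion $M\subseteq\mathcal{B}(G_M)$ via the partition of words by their $j$-th letter, and the inclusion $\mathcal{B}(G_M)\subseteq\mathcal{B}_{\rm{fin}}(G)$ by pulling a bipartition of $V(G_M)$ back through $\phi^{-1}$. You additionally make explicit the finiteness argument (every non-loop edge of $G_M$ lies in some $C_j\in M$, so $G_M$ has only finitely many non-loop edges), a point the paper's proof leaves implicit; this is a worthwhile clarification but not a different method.
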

\begin{proof}{
First, we show that~$M\subseteq\mathcal{B}(G_M)$. Let~$C=E(A,B)\in M$. 
Then consider the set of all words containing~$A$ and do the same for all words containing~$B$, say~$\mathcal{A}$ and~$\mathcal{B}$, respectively. 
The sets~$\mathcal{A}$ and~$\mathcal{B}$ form a partition of~$G_M$ and so  we have~$C$ as a cut of~$G_M$. 
Note that~$\mathcal{A}$ and~$\mathcal{B}$ are not empty, since every~$uv\in C$ induces vertices~$V_u\in \mathcal{A}$ and~$V_v\in \mathcal{B}$. 
Now, assume that~$C=E(A,B)\in\mathcal{B}(G_M)$. 
We deduce from Remark \ref{phi} that the edges between~$A$ and~$B$  in~$G_M$ are those between~$\phi^{-1}(A)$ and~$\phi^{-1}(B)$. 
Hence~$(\phi^{-1}(A),\phi^{-1}(B))$ forms a partition of~$G$ and so~$C$ is a finite cut of~$G$.
}\end{proof}
\noindent The following theorem plays a vital role in this paper and is a basic key to generalize flow theory of finite to infinite graphs.
\begin{thm}\label{compact}
Let~$G$ be a graph and~$H$ be an abelian Hausdorff topological group with compact subset~$A$. 
Then~$G$ has an~$A$-flow if and only if~$G_M$ has an~$A$-flow for every finite subset~$M$ of~$\mathcal{B}_{\rm{fin}}(G)$.
\end{thm}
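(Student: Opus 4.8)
The plan is to prove the two directions separately, with the backward direction being the substantive one, handled by a compactness (Tychonoff) argument.

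\medskip

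\noindent\textbf{The easy direction.} First I would show that if $G$ has an $A$-flow $f$, then so does every $G_M$. Given a finite $M\subseteq\mathcal{B}_{\mathrm{fin}}(G)$, the map $\phi\colon G\to G_M$ of Remark \ref{phi} is a bijection on edges, so $f$ transports to a map $f_M$ on $\overrightarrow{E}(G_M)$ (assigning loop-edges of $G_M$ arbitrarily, as in Remark \ref{withoutloops}). By Lemma \ref{cut} every finite cut of $G_M$ is a finite cut of $G$, and by Remark \ref{phi} the edges across a cut of $G_M$ are exactly the edges across the corresponding cut of $G$; hence $\sigma(f_M)$ vanishes on $\overrightarrow{\mathcal{B}_{\mathrm{fin}}}(G_M)$ because $\sigma(f)$ vanishes on $\overrightarrow{\mathcal{B}_{\mathrm{fin}}}(G)$, and the values lie in $A$. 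So $f_M$ is an $A$-flow of $G_M$.

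\medskip

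\noindent\textbf{The hard direction.} Now suppose every $G_M$ has an $A$-flow; I want an $A$-flow of $G$. The idea is that $A^{\overrightarrow{E}}$, with the product topology inherited from the compact space $A$, is compact by Tychonoff's theorem. For each finite $M$, let
$$\mathcal{F}_M=\{\,f\in A^{\overrightarrow{E}}\;\mid\;\sigma(f)(\overrightarrow{E}(X,Y))=0\text{ for every }\overrightarrow{E}(X,Y)\in M\,\}.$$
Each $\mathcal{F}_M$ is closed in $A^{\overrightarrow{E}}$: the condition for a single cut $(X,Y)\in\mathcal{B}_{\mathrm{fin}}(G)$ involves only the finitely many coordinates $\overrightarrow{e}\in\overrightarrow{E}(X,Y)$, and the map $f\mapsto\sum_{\overrightarrow{e}\in\overrightarrow{E}(X,Y)}f(\overrightarrow{e})$ is continuous $A^{\overrightarrow{E}}\to H$ (finite sum of projections composed with the continuous group operation), so $\mathcal{F}_M$ is the preimage of the closed set $\{0\}$ under a continuous map (here we use that $H$ is Hausdorff, so singletons are closed), intersected over the finitely many cuts in $M$. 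Next I would pull back an $A$-flow of $G_M$ to an element of $\mathcal{F}_M$, which shows $\mathcal{F}_M\neq\varnothing$: if $g$ is an $A$-flow of $G_M$, then $g\circ\phi$ (extended by arbitrary values on edges not metered by $M$, i.e.\ on loops of $G_M$ — here one must be slightly careful, since $M$ may not record all edges of $G$, but the un-metered edges appear in no cut of $M$, so their values are free) lies in $\mathcal{F}_M$ by the cut correspondence of Remark \ref{phi} and Lemma \ref{cut}. Finally, the family $\{\mathcal{F}_M\}$ has the finite intersection property, since $\mathcal{F}_{M_1}\cap\cdots\cap\mathcal{F}_{M_k}\supseteq\mathcal{F}_{M_1\cup\cdots\cup M_k}\neq\varnothing$; by compactness $\bigcap_M\mathcal{F}_M\neq\varnothing$, and any $f$ in this intersection satisfies $\sigma(f)(\overrightarrow{E}(X,Y))=0$ for \emph{every} finite cut, i.e.\ $f$ is an $A$-flow of $G$.

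\medskip

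\noindent\textbf{Where the difficulty lies.} The routine parts are the topology bookkeeping. The step that needs the most care is the faithful translation between flows on $G_M$ and elements of $\mathcal{F}_M\subseteq A^{\overrightarrow{E}}$: one must check that $\phi$ really is an edge-bijection that carries each finite cut of $G_M$ onto a finite cut of $G$ lying in $M$ and conversely, so that ``$\sigma$ vanishes on $M$'' for $f$ on $G$ is equivalent to ``$\sigma$ vanishes on all of $\overrightarrow{\mathcal{B}_{\mathrm{fin}}}(G_M)$'' for the pushed-forward map; this is exactly what Lemma \ref{cut} and Remark \ref{phi} provide, together with Remark \ref{withoutloops} to handle the loops of $G_M$ and any edges of $G$ not separated by $M$. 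Everything else — continuity of the cut-sum maps, closedness of the $\mathcal{F}_M$, and the finite intersection property — is straightforward once the product-topology framework is set up.
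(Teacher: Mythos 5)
Your proposal is correct and follows essentially the same route as the paper: the forward direction via Lemma \ref{cut} (every finite cut of $G_M$ is a finite cut of $G$), and the backward direction via Tychonoff compactness of $A^{\overrightarrow{E}}$, closedness of the sets cut out by the continuous cut-sum maps (using that $H$ is Hausdorff), non-emptiness of each $F_M$ from an $A$-flow of $G_M$, and the finite intersection property. Your extra care about loops of $G_M$ and the edge bijection $\phi$ matches what the paper delegates to Remarks \ref{withoutloops} and \ref{phi}.
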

\begin{proof}{
First, assume that~$G$ has an~$A$-flow. 
By Lemma \ref{cut}, every finite cut of~$G_M$ belongs to~$\mathcal{B}_{\rm{fin}}(G)$. 
So every~$A$-flow of~$G$ is an~$A$-flow of~$G_M$. 
In particular,~$G_M$ has some~$A$-flow. 
For the backward implication, since~$H$ is a topological group, the sets~$H^{\overrightarrow{E}}$ and 
$H^{\overrightarrow{\mathcal{B}_{\rm{fin}}}(G)}$ are endowed with the product topology.  
Let~$M=\{C_{1},\ldots,C_{t}\}$ be a subset of~${\overrightarrow{\mathcal{B}_{\rm{fin}}}(G)}$. 
We define~$\sigma_{i}\colon H^{\overrightarrow{E}}\to H$ by~$\sigma_{i}(f)=\sum_{e\in C_{i}} f(e)$.
Since the sum operation is a continuous map,~$\sigma_{i}$ is continuous for each~$i$. 
Therefore~$\sigma_{i}^{-1}(0)$ is a closed subspace in~$H^{\overrightarrow{E}}$, as~$H$ is Hausdorff. 
On the other hand, by Tychonoff's theorem (see \cite[Theorem 37.3]{munkres}),~$A^{\overrightarrow{E}}$ is compact and so is~$\sigma_{ i}^{-1}(0)\cap A^{\overrightarrow{E}}$. 
It is clear that~$F_M=\bigcap_{i=1}^t\sigma_{i}^{-1}(0)\cap A^{\overrightarrow{E}}$ and so~$F_M$ is compact. 
Since $G_M$ has an $A$-flow, by definition, the set $F_{{\mathcal B}(G_M)}$ is not empty. 
Lemma \ref{cut} implies that $F_M$ is not empty.
Hence the intersection of every finite family of~$F_{\{C_{i}\}}$ with~$C_{i}\in\mathcal{B}_{\rm{fin}}(G)$ is 
not empty.
Since~$A^{\overrightarrow{E}}$ is compact, we deduce that~$F_{\mathcal{B}_{\rm{fin}}(G)}=\underset{C_{i}\in\mathcal{B}_{\rm{fin}}(G)} {\bigcap}F_{\{C_{i}\}}$ is not empty, see \cite[Theorem 26.9]{munkres}. 
Thus~$G$ has an~$A$-flow.
}\end{proof}
For finite graphs, the existence of a nowhere-zero~$H$-flow does not depend on the structure of~$H$ but only on its order, see \cite[Corollary 6.3.2]{diestel}. 
In the next corollary, we show that the same is true for infinite graphs.   
\begin{cor}\label{HH'}
Let~$H$ and~$H'$ be two finite abelian groups with equal order. 
Then~$G$ has a non-elusive~$H$-flow if and only if~$G$ has a non-elusive~$H'$-flow. 
\end{cor}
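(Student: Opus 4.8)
The plan is to reduce the infinite statement to the finite one via the compactness machinery just developed, namely Theorem \ref{compact}. The key observation is that $G$ has a non-elusive $H$-flow precisely when $G$ has an $A$-flow for $A = H \setminus \{0\}$, and that since $H$ is finite, $A$ is a finite hence compact subset of $H$ with the discrete topology. Thus Theorem \ref{compact} applies and tells us: $G$ has a non-elusive $H$-flow if and only if $G_M$ has a non-elusive $H$-flow for every finite $M \subseteq \mathcal{B}_{\rm fin}(G)$. The same equivalence holds verbatim with $H'$ in place of $H$.

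First I would set up this translation carefully, being mindful of Remark \ref{withoutloops}: the multigraph $G_M$ should be taken without its loops before invoking any finite result, and a flow on the loopless $G_M$ extends arbitrarily (to any nonzero value) over the loops, which does not affect the flow conditions since no finite cut contains a loop. With that in hand, the core step is simply to quote the finite result \cite[Corollary 6.3.2]{diestel}: for a \emph{finite} multigraph, the existence of a nowhere-zero $H$-flow depends only on $|H|$. Applied to each finite multigraph $G_M$, this gives that $G_M$ has a non-elusive $H$-flow if and only if it has a non-elusive $H'$-flow, using $|H| = |H'|$.

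Chaining these equivalences finishes the argument: $G$ has a non-elusive $H$-flow $\iff$ $G_M$ has a non-elusive $H$-flow for all finite $M$ $\iff$ $G_M$ has a non-elusive $H'$-flow for all finite $M$ $\iff$ $G$ has a non-elusive $H'$-flow. One small point to address is that the definition of non-elusive $H$-flow requires $f$ to take values in $A \subseteq H \setminus \{0\}$, so one should note that the natural choice $A = H \setminus \{0\}$ is the most permissive, and that the finite-graph corollary is indeed about nowhere-zero flows valued in all of $H \setminus \{0\}$; I would state this alignment explicitly so the reader sees that the infinite $A$-flow formalism specializes correctly.

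The main obstacle, such as it is, is largely bookkeeping rather than mathematics: one must make sure that "non-elusive $H$-flow of $G_M$" in the sense of Definition 1 (applied to the finite graph $G_M$) genuinely coincides with the classical notion of a nowhere-zero $H$-flow used in \cite{diestel}, so that Corollary 6.3.2 there can be invoked as a black box. The excerpt already remarks that for locally finite graphs Definition 1 agrees with the usual one, and $G_M$ is finite, so this is immediate — but it is the step where care is needed, since Definition 1 phrases the Kirchhoff condition in terms of finite cuts rather than vertices, and one wants to recall (as the text does) that for finite graphs these two formulations are equivalent.
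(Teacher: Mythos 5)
Your proposal is correct and follows essentially the same route as the paper's own proof: reduce to the finite multigraphs $G_M$ via Theorem \ref{compact}, discard the loops as in Remark \ref{withoutloops} so that only finitely many edges remain, apply \cite[Corollary 6.3.2]{diestel}, and chain the equivalences back. Your additional remark that Definition 1 specializes to the classical nowhere-zero $H$-flow on the finite loopless $G_M$ is a point the paper leaves implicit, but there is no substantive difference in approach.
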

\begin{proof}{
We note that~$H$ and~$H'$  are endowed by the discrete topologies and so they are compact. 
Suppose~$G$ has a non-elusive~$H$-flow.
By Theorem \ref{compact}, for every finite subset~$M$ of~$\mathcal{B}_{\rm{fin}}(G)$, the multigraph~$G_M$ has a   non-elusive~$H$-flow. 
We notice that $G_M$ might have infinitely many loops.
Since each loop appears twice, we can ignore them and so we only care the rest of edges which are finite.
Thus we are able to apply  \cite[Corollary 6.3.2]{diestel} and conclude that every~$G_M$ has  a non-elusive~$H'$-flow. 
Again, it follows from Thereom \ref{compact} that~$G$ has a  non-elusive~$H$-flow. 
The other direction follows from the symmetry of the statement.~}\end{proof}
  
  \noindent There is a direct connection between~$k$-flows and non-elusive~$\mathbb Z_k$-flows in finite graphs which was discovered by Tutte, see \cite{tutte}. In the next corollary, we use Theorem \ref{compact} and show that having a~$k$-flow and a non-elusive~$\mathbb Z_k$-flow
   are equivalent in infinite graphs. 

\begin{cor}\label{kZk}
  A graph admits a~$k$-flow if and only if it admits a non-elusive~$\mathbb Z_k$-flow.
\end{cor}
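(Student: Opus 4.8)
The plan is to imitate the proof of Corollary \ref{HH'}, invoking Theorem \ref{compact} together with the classical theorem of Tutte for finite graphs \cite{tutte}. First I would note that both notions appearing in the statement are special cases of an $A$-flow in the sense of Definition~1: by Definition~2 a $k$-flow is an $A$-flow with $H=\mathbb{Z}$ carrying the discrete topology and $A=\{-(k-1),\ldots,k-1\}\setminus\{0\}$, while a non-elusive $\mathbb{Z}_k$-flow is an $A'$-flow with $H=\mathbb{Z}_k$ discrete and $A'=\mathbb{Z}_k\setminus\{0\}$. In both cases the relevant subset is finite, hence compact, so Theorem \ref{compact} is available for each of them.

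For the forward implication, assume $G$ admits a $k$-flow. By Theorem \ref{compact}, for every finite $M\subseteq\mathcal{B}_{\mathrm{fin}}(G)$ the multigraph $G_M$ admits a $k$-flow. As in Corollary \ref{HH'}, $G_M$ may carry infinitely many loops, but by Remark \ref{withoutloops} loops lie in no cut, so their flow values do not affect whether a map is a flow; deleting them leaves a finite multigraph, to which Tutte's theorem applies and produces a non-elusive $\mathbb{Z}_k$-flow, which we then re-extend arbitrarily (still with nonzero values) over the loops. Hence every $G_M$ admits a non-elusive $\mathbb{Z}_k$-flow, and Theorem \ref{compact}, now applied with $H=\mathbb{Z}_k$ and $A'=\mathbb{Z}_k\setminus\{0\}$, shows that $G$ admits a non-elusive $\mathbb{Z}_k$-flow. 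The reverse implication is the mirror image, running the same argument with the converse direction of Tutte's theorem.

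I do not anticipate a real obstacle: the two points that need attention are that $\mathbb{Z}$ and $\mathbb{Z}_k$ must be given the discrete topology so that the pertinent sets $A$, $A'$ are compact and Theorem \ref{compact} applies, and that the finite-graph input must be quoted in a form valid for multigraphs possibly carrying loops — but the loop bookkeeping is already taken care of by Remark \ref{withoutloops}. Once these are in place the proof is a direct transcription of that of Corollary \ref{HH'}.
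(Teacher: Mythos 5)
Your proof is correct and, for the substantive direction (a non-elusive $\mathbb{Z}_k$-flow yields a $k$-flow), it coincides with the paper's argument: apply Theorem \ref{compact} in both directions with Tutte's theorem supplying the conversion on each finite contraction $G_M$, with the loops of $G_M$ handled as in Remark \ref{withoutloops}. The only divergence is the forward implication, where the paper simply composes a $k$-flow with the canonical homomorphism $\mathbb{Z}\to\mathbb{Z}_k$ (the values in $\{-(k-1),\ldots,k-1\}\setminus\{0\}$ remain nonzero modulo $k$ and cut sums remain zero), which settles that direction in one line without any compactness; your route through Theorem \ref{compact} and Tutte is also valid but more roundabout. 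Your points about giving $\mathbb{Z}$ and $\mathbb{Z}_k$ the discrete topology so that $A$ and $A'$ are compact, and about re-extending nonzero values over the loops, are correct and consistent with the paper's treatment in Corollary \ref{HH'}.
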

\begin{proof}
{ The canonical homomorphism~$\mathbb Z\rightarrow{\mathbb Z_k}$ implies the forward 
 implication. For the converse, assume that~$G$ has a non-elusive~$\mathbb Z_k$-flow.  By Theorem \ref{compact}, for every finite subset~$M$ of~$\mathcal{B}_{\rm{fin}}(G)$, the multigraph~$G_M$ has  a non-elusive~$\mathbb Z_k$-flow.
 We consider~$\mathbb Z_k$  with the discrete topology. It follows from Theorem \ref{compact} and \cite{tutte} that every~$G_M$ has 
  a~$k$-flow. Again, we invoke Theorem \ref{compact} and we conclude that~$G$ has a~$k$-flow.}
\end{proof}

\noindent Next up, we study non-elusive~$\mathbb Z_m$-flows for some special values of~$m$. First, we study non-elusive~$\mathbb Z_2$-flows
 for locally finite graphs. It is worth mentioning that if~$G$ is an arbitrary infinite graph and~$G$ has a 
 non-elusive~$\mathbb Z_2$-flow, then one can see that all finite cuts of~$G$ are even and vice versa.
First we need a notation.
Suppose that $G=(V,E)$ is a graph and $F$ is a subset of $E$.
We define the indicate function $\delta_F\colon E\to \mathbb Z_2$ in the following way:\\
$$\delta_F(e):=\left\{ \begin{array}{rcl}
         1 & \mbox{for}
         & e\in F \\ 0  & \mbox{for} & e\notin F 
                \end{array}\right.
$$

\begin{thm}\label{Z_2}
Let~$G=(V,E)$ be a locally finite graph and let $F$ be a subset of $E$.
 Then $\delta_F$ is a~$\mathbb Z_2$-flow if and only $F\in \mathcal{C}(G)$.
\end{thm}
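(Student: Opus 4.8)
The plan is to reduce the statement, via the compactness machinery of Theorem~\ref{compact}, to a pair of finite or locally finite facts that we already have in hand. Recall that $\delta_F$ is a $\mathbb Z_2$-flow precisely when $\delta_F(A,B)=0$ for every finite cut $(A,B)$ of $G$, i.e.\ when $|F\cap E(A,B)|$ is even for every finite cut. But that is exactly condition (i) in Theorem~\ref{cycle space}, which for a locally finite connected graph is equivalent to $F\in\mathcal C(G)$. So if $G$ is connected the statement is almost immediate; the only work is to (a) dispose of the connectedness hypothesis, since Theorem~\ref{cycle space} is stated for connected $G$ whereas here $G$ is merely locally finite, and (b) note that over $\mathbb Z_2$ the flow condition ``sum over every finite cut is $0$'' literally unwinds to ``$F$ meets every finite cut evenly.''

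First I would handle the easy direction. Suppose $F\in\mathcal C(G)$. By definition of the topological cycle space, $F$ meets every finite cut in an even number of edges (this is part of the standard description of $\mathcal C(G)$, and is exactly Theorem~\ref{cycle space}(i) restricted to a component; alternatively one may invoke that $\mathcal C(G)\subseteq\mathcal C(G)$ is orthogonal to $\mathcal B_{\mathrm{fin}}(G)$ over $\mathbb Z_2$). Hence for every finite oriented cut $\overrightarrow E(A,B)$ we get $\delta_F(A,B)=\sum_{\overrightarrow e\in\overrightarrow E(A,B)}\delta_F(\overrightarrow e)=|F\cap E(A,B)|\bmod 2=0$, so $\delta_F$ is a $\mathbb Z_2$-flow.

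For the converse, suppose $\delta_F$ is a $\mathbb Z_2$-flow, so $|F\cap E(A,B)|$ is even for every finite cut $(A,B)$. Decompose $G$ into its connected components $G^{(j)}$, and let $F^{(j)}=F\cap E(G^{(j)})$. A finite cut of $G^{(j)}$ is also a finite cut of $G$ (extend the bipartition trivially on the other components), so $F^{(j)}$ meets every finite cut of the locally finite \emph{connected} graph $G^{(j)}$ evenly. Now Theorem~\ref{cycle space} applies to $G^{(j)}$ and gives $F^{(j)}\in\mathcal C(G^{(j)})$. Finally, since $\mathcal C(G)=\bigoplus_j\mathcal C(G^{(j)})$ (a topological cycle of $G$ is, componentwise, a topological cycle of each $G^{(j)}$, and conversely the disjoint union of topological cycles is one — distinct components share no end, so no topological subtlety arises), we conclude $F=\bigcup_j F^{(j)}\in\mathcal C(G)$.

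The main obstacle, such as it is, is bookkeeping rather than mathematics: making sure that the passage between the ``meets every finite cut evenly'' formulation and membership in $\mathcal C(G)$ is invoked with the correct hypotheses, and that the reduction to connected graphs is clean (in particular that cuts of a component lift to cuts of $G$, which is where local finiteness of each component is used to keep the cut finite — but that is inherited from $G$). One could equally phrase the whole argument through Theorem~\ref{compact}, passing to each finite contraction $G_M$ and using the finite fact that $\delta_{F}$ restricted to $G_M$ is a $\mathbb Z_2$-flow iff the image of $F$ lies in the (finite) cycle space of $G_M$; but since Theorem~\ref{cycle space} already packages the locally finite case, the direct route above is shorter.
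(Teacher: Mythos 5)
Your proof is correct and takes essentially the same route as the paper: both directions are a direct application of Theorem~\ref{cycle space}. The only difference is that you invoke condition~(i) of that theorem (meeting every finite cut evenly), which matches the $\mathbb Z_2$-flow condition verbatim, whereas the paper passes through the equivalent condition~(ii) (every vertex and end being $F$-even); your choice is if anything the more direct one, and your extra care with disconnected graphs is harmless but unnecessary, since the paper restricts attention to connected graphs from Section~2 onward.
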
 
\begin{proof}{
First suppose that $\delta_F$ is a~$\mathbb Z_2$-flow.
It is not hard to see that every vertex and every end of $G$ is $F$-even.
So it follows from Theorem \ref{cycle space} that $F$ belongs to the cycle space of $G$.
For the backward implication, since $F\in\mathcal{C}(G)$, we are able to invoke Theorem \ref{cycle space} and conclude that every vertex and every end of $G$ is $F$-even.
Thus it implies that $\delta_F$ is a $\mathbb Z_2$-flow.
}\end{proof}
\noindent It is not hard to see that if a cubic graph~$G$ has a non-elusive~$\mathbb Z_3$-flow, then~$G$ is bipartite.
For a cubic graph $G$, having a non-elusive $\mathbb Z_3$-flow is equivalent to having an orientation of $G$ in such a way that for every vertex $v$ of $G$ all incident edges of $v$ are either directed outward or directed inward and moreover all assignments are one. 
Let $G$ be a graph as depicted on Figure 1.1. 
Consider orientations with the above property.
So we have two cases.
In each case, we have a finite cut whose sum of assignments is not zero, see Figure 3.0.1.
\begin{center}
\begin{tikzpicture}
[x=1.3cm, y=1cm,
    every edge/.style={
        draw,
        postaction={decorate,
                    decoration={markings,mark=at position 0.5 with {\arrow[scale=0.09cm]{>}}}
                   }
        }
]

\draw (0,1) node [circle,fill, inner sep=2pt] {};
\draw (2,1) node [circle,fill, inner sep=2pt] {};
\draw (0,0) node [circle,fill, inner sep=2pt] {};
\draw (2,0) node [circle,fill, inner sep=2pt] {};
\draw (0,-1) node [circle,fill, inner sep=2pt] {};
\draw (2,-1) node [circle,fill, inner sep=2pt] {};

\draw (9,1) node [circle,fill, inner sep=2pt] {};
\draw (7,1) node [circle,fill, inner sep=2pt] {};
\draw (9,0) node [circle,fill, inner sep=2pt] {};
\draw (7,0) node [circle,fill, inner sep=2pt] {};
\draw (9,-1) node [circle,fill, inner sep=2pt] {};
\draw (7,-1) node [circle,fill, inner sep=2pt] {};

\path
		(9,-1) edge (7,-1)
		(9,1) edge (7,1)
		(0,-1) edge (2,-1)
		(7,0) edge (9,0)
	    (0,1)edge(2,1)
		(2,0)  edge (0,0) 
		(0,-1) edge (2,-1) 
        
		;
\draw[dashed] (2,-1.4) arc (90:270:0.6cm and -1.36cm);
\draw[dashed] (0.1,1.3) arc (-90:90:0.6cm and -1.36cm);
\draw[dashed] (9,-1.4) arc (90:270:0.6cm and -1.36cm);
\draw[dashed] (7,1.3) arc (-90:90:0.6cm and -1.36cm);
\begin{scope}[shift={(0.5,0.5)}]
\end{scope}
\end{tikzpicture}
\end{center}
\begin{center}
Figure 3.0.1
\end{center}

\noindent Hence, we propose this question: When does  a cubic graph has a non-elusive~$\mathbb Z_3$-flow?\\
Recently, Thomassen used~$S^1$ and~$R_3$ in flow theory of finite multigraphs and investigated the connection of such flows with~$\mathbb Z_3$-flows for finite multigraphs, see \cite{thom2}.
Now let us review these notations here. 
Let $G=(V,E)$ be a finite multigraph without loops.
Then an~$S^1$-flow is the same as a flow whose flow values are complex numbers with absolute value~$1$.
But we first choose an orientation for each $e\in E$ and then we assign elements of $S^1$ on the edges.\footnote{We follow this approach only for the next three results.} 
Let~$R_k$ denote the set of~$k$-th roots of unity, that is, the solutions to the equation 
$z^k=1$. 
\begin{lem} {\rm\cite[Proposition 1]{thom2}}\label{cubic}
 Let~$G$ be a finite multigraph without loops. Then {\rm (i)} and {\rm (ii)} below are equivalent, and they imply the statement {\rm (iii)}  
\begin{enumerate}
\item[{\rm (i)}]~$G$ has a non-elusive~$\mathbb Z_3$-flow.
\item[{\rm (ii)}]~$G$ has an~$R_3$-flow.
\item[{\rm (iii)}]~$G$ has an~$S^1$-flow.
\end{enumerate}
\noindent If~$G$ is cubic, the three statements are equivalent, and~$G$ satisfies
{\rm (i)}, {\rm (ii)}, {\rm (iii)}  if and only if~$G$ is bipartite.
\end{lem}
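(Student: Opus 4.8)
The plan is to prove the chain of implications and their converses for cubic multigraphs by a direct geometric/combinatorial argument, leaning on the orientation description of non-elusive $\mathbb{Z}_3$-flows that was sketched just above the statement. First I would establish $\text{(i)}\Rightarrow\text{(ii)}$: given a non-elusive $\mathbb{Z}_3$-flow $f$, identify $\mathbb{Z}_3$ with $R_3=\{1,\zeta,\zeta^2\}$ where $\zeta=e^{2\pi i/3}$ via the isomorphism $k\mapsto \zeta^k$. The subtlety is that Kirchhoff's law for a $\mathbb{Z}_3$-flow is additive ($\sum f(\overrightarrow{e})=0$), whereas for an $R_3$-flow in Thomassen's multiplicative convention the condition at each vertex reads as a product of the incident oriented values equal to $1$; but $0\in\mathbb{Z}_3$ corresponds to $1\in R_3$ and the group isomorphism turns sums into products, so the translation is mechanical once orientations are matched. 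The converse $\text{(ii)}\Rightarrow\text{(i)}$ is the same isomorphism read backwards. The implication $\text{(ii)}\Rightarrow\text{(iii)}$ is immediate since $R_3\subseteq S^1$.

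For the cubic case I would argue that $\text{(iii)}\Rightarrow\text{(i)}$, which closes the cycle, and simultaneously characterize when these hold in terms of bipartiteness. The key observation for a cubic multigraph is that at a vertex of degree $3$, if $a,b,c\in S^1$ are the three oriented flow values meeting there (all oriented consistently, say, outward, so the conservation law is $a\cdot b\cdot c=1$ after taking the multiplicative convention — or rather, with one edge possibly directed inward, $abc^{-1}=1$, etc.), then among three unit complex numbers whose product (with signs) is $1$, a short case analysis forces each of them to be a cube root of unity once we also use conservation at the other endpoint. More cleanly: I would show that in a cubic multigraph an $S^1$-flow forces, at each vertex, the three values to be $\{1,\zeta,\zeta^2\}$ up to the orientation signs, essentially because the only way three points on the unit circle can sum appropriately under the constraints propagated through the graph is the balanced configuration; this is exactly the content that makes $S^1$-flows collapse to $R_3$-flows on cubic graphs. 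For the bipartiteness equivalence, note that a non-elusive $\mathbb{Z}_3$-flow on a cubic graph gives a proper $3$-edge-coloring (the three incident edges at each vertex carry the three distinct nonzero values), and such colorings correspond to the graph being bipartite in the cubic case via the standard argument that the color classes partition vertices into the two sides; conversely a bipartition yields an orientation (edges from side $X$ to side $Y$) with all values $1$, which is a non-elusive $\mathbb{Z}_3$-flow by inspection of Kirchhoff's law.

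The main obstacle I anticipate is the careful bookkeeping of orientations in the $S^1\Rightarrow R_3$ step on cubic graphs: Thomassen's $S^1$-flows fix an orientation of $E$ first and then assign values, so one must be scrupulous that the conservation constraint at each vertex is stated correctly (a product of $f(\overrightarrow{e})^{\pm 1}$ depending on whether the fixed orientation points toward or away from the vertex), and then show the resulting system of multiplicative constraints on the circle group has only solutions valued in $R_3$ when every vertex has degree $3$. This is where one genuinely uses cubicity — the analogous claim fails for higher degrees — and where I would invest most of the argument, probably by a spanning-tree propagation: fix values on a spanning tree arbitrarily in $S^1$, derive the forced value on each non-tree edge, and observe the vertex constraints then pin everything down to $R_3$. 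Everything else is either the group isomorphism $\mathbb{Z}_3\cong R_3$ or the classical cubic-bipartite/$3$-edge-colorable dictionary, which I would quote or reprove in a line.

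Since this is quoted as \cite[Proposition 1]{thom2}, an alternative and shorter plan is simply to cite it and move on; but if a self-contained proof is wanted, the route above is the natural one.
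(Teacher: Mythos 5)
The paper itself gives no proof of this lemma; it is imported as Proposition~1 of \cite{thom2}, so your closing remark --- just cite it --- is exactly what the paper does. Measured against Thomassen's actual argument, however, your sketch has two genuine defects.

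First, you read the $R_3$- and $S^1$-flow conditions multiplicatively (``a product of the incident oriented values equal to $1$'', ``$abc^{-1}=1$''). In this paper and in \cite{thom2} an $S^1$-flow is a flow in the \emph{additive} group $\mathbb{C}$ whose values happen to lie on the unit circle: the conservation law is that the vector sum over each oriented cut vanishes. Under your multiplicative reading the constant function $1$ would be an $S^1$-flow (indeed an $R_3$-flow) on every graph and the lemma would be false; under the correct additive reading, the isomorphism $\mathbb{Z}_3\cong R_3$ does not turn Kirchhoff sums into anything useful, so (i)$\Leftrightarrow$(ii) is not a ``mechanical translation'' but a real theorem about integer flows (an $R_3$-flow is a $\mathbb{Z}^2$-valued flow with values in $\{(1,0),(0,1),(-1,-1)\}$ once one identifies $\mathbb{Z}[\zeta]$ with $\mathbb{Z}^2$), which \cite{thom2} derives from Tutte's equivalence of non-elusive $\mathbb{Z}_3$-flows with modulo-$3$ orientations. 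Your cubic step (iii)$\Rightarrow$(ii) is the one place where you implicitly use the additive picture, and there the idea is right: three unit vectors summing (with signs) to zero form a rotated copy of $R_3$, connectivity propagates the rotation, and dividing by it yields an $R_3$-flow --- this is precisely the argument of Lemma~\ref{ourcubic} in the paper. Second, your bipartiteness argument fails: $\mathbb{Z}_3\setminus\{0\}$ has only two elements, so the three edges at a cubic vertex cannot carry ``three distinct nonzero values'', and in any case $3$-edge-colorability is not equivalent to bipartiteness for cubic graphs ($K_4$ is $3$-edge-colorable and non-bipartite); that dictionary belongs to $\mathbb{Z}_4$-flows. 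The correct route, stated in the paper just before the lemma, is that a cubic graph has a non-elusive $\mathbb{Z}_3$-flow iff it admits an orientation in which every vertex is a source or a sink with all values equal to $1$, and such orientations exist iff the graph is bipartite.
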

We generalize Lemma \ref{cubic}. We replace the condition \emph{cubic} with an edge dominating set~$H$ of vertices such 
that the degree of every vertex of~$H$ is~$3$. 
A subset~$H$ of vertices is an \textit{edge dominating set} if every edge of the graph has an end vertex in~$H$.
\begin{lem}\label{ourcubic}
Let~$G$ be a finite  multigraph without loops with a connected edge dominating set~$U$ of vertices such that every vertex of~$U$ has degree~$3$. 
Then the following three statements are equivalent.
\begin{enumerate}
\item[{\rm (i)}]~$G$ has a non-elusive~$\mathbb Z_3$-flow.
\item[{\rm (ii)}]~$G$ has an~$R_3$-flow.
\item[{\rm (iii)}]~$G$ has an~$S^1$-flow.
\end{enumerate}
\end{lem}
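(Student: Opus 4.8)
The plan is to deduce everything from the single implication (iii) $\Rightarrow$ (i). Indeed, Lemma \ref{cubic} applies to the finite loopless multigraph $G$ (its first part needs no hypothesis on $U$), so it already gives (i) $\Leftrightarrow$ (ii) and (i) $\Rightarrow$ (iii); together with (iii) $\Rightarrow$ (i) this yields the full equivalence. Hence it suffices to manufacture a non-elusive $\mathbb Z_3$-flow out of an $S^1$-flow.

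So suppose $f$ is an $S^1$-flow of $G$, viewed as an antisymmetric map on $\overrightarrow E$ with $|f(\overrightarrow e)|=1$ and $\sum_{\overrightarrow e\text{ leaving }v}f(\overrightarrow e)=0$ for every vertex $v$. The crucial fact is local and elementary: three unit complex numbers that sum to $0$ are the three cube roots of one common unit number, i.e.\ they form a set of the shape $z\cdot R_3$ with $R_3=\{1,\omega,\omega^2\}$, $\omega=e^{2\pi i/3}$. Therefore at each $u\in U$ (which has degree $3$) the three values leaving $u$ constitute a coset $z_uR_3$. If $e=uu'$ is an edge with both ends in $U$, the value leaving $u$ along $e$ equals minus the value leaving $u'$ along $e$, so $z_uR_3=-z_{u'}R_3$; as $-1\notin R_3$ no coset of $R_3$ equals its own negative, so tracing edges of the connected graph $G[U]$ shows that $G[U]$ has no odd cycle. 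Thus $G[U]$ is bipartite; fixing $\gamma:=z_{u_0}$ for a base vertex $u_0$, its bipartition is $U=U_+\sqcup U_-$ with $z_uR_3=\gamma R_3$ on $U_+$ and $z_uR_3=-\gamma R_3$ on $U_-$.

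Now take any $w\in W:=V\setminus U$. Since $U$ is edge dominating, every edge at $w$ joins $w$ to some $u_i\in U$, and the value leaving $u_i$ along that edge is $+\gamma\omega^{k_i}$ or $-\gamma\omega^{k_i}$ ($k_i\in\mathbb Z_3$) according as $u_i\in U_+$ or $u_i\in U_-$. Applying Kirchhoff's law at $w$ and using antisymmetry, these values sum to $0$, so $\sum_{u_i\in U_+}\omega^{k_i}=\sum_{u_i\in U_-}\omega^{k_i}$; writing each $\omega^{k}$ as $1+(\omega^k-1)$ and recalling $(\omega-1)\mid(\omega^k-1)$ in $\mathbb Z[\omega]$, comparison of the $\{1,\omega\}$-coordinates forces the number $a_w$ of edges from $w$ to $U_+$ and the number $b_w$ of edges from $w$ to $U_-$ to satisfy $a_w\equiv b_w\pmod 3$. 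Define $g\colon\overrightarrow E\to\mathbb Z_3$ by setting $g(\overrightarrow e)=1$ when $\overrightarrow e$ leaves $U_+$ or enters $U_-$, and $g(\overrightarrow e)=2$ otherwise. Bipartiteness of $G[U]$ together with edge domination makes $g$ well defined and antisymmetric, and $g$ never vanishes. At a vertex of $U_+$ the three leaving values are all $1$ (sum $3\equiv 0$), at a vertex of $U_-$ they are all $2$ (sum $6\equiv 0$), and at $w\in W$ the leaving values are $2$ with multiplicity $a_w$ and $1$ with multiplicity $b_w$, of total sum $2a_w+b_w\equiv 3a_w\equiv 0\pmod 3$ by the congruence above. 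Hence $g$ is a non-elusive $\mathbb Z_3$-flow, and (iii) $\Rightarrow$ (i) follows.

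The one step that really uses something beyond bookkeeping is the treatment of the vertices of $W$, whose degrees are unrestricted: one must extract from the seemingly weak statement ``a collection of unit vectors sums to zero at $w$'' precisely the congruence $a_w\equiv b_w\pmod 3$ that makes the candidate $\mathbb Z_3$-flow conservative at $w$, and the quickest route is the small computation in $\mathbb Z[\omega]$ (equivalently, reduction modulo the prime $1-\omega$, which collapses $R_3$ to $1$). Everything else — the equilateral-triangle observation, the bipartiteness of $G[U]$, and checking that $g$ is a flow on the degree-$3$ part — is routine.
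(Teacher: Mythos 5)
Your proof is correct, and its logical skeleton matches the paper's: both invoke Lemma \ref{cubic} to get (i)$\Leftrightarrow$(ii) and (i)$\Rightarrow$(iii) for free, both rest on the observation that three unit vectors summing to zero form a coset $zR_3$, and both propagate that coset structure through the connected edge-dominating set $U$. Where you diverge is in the target of the remaining implication: the paper proves (iii)$\Rightarrow$(ii) and stops, whereas you prove (iii)$\Rightarrow$(i) directly. The paper's route is shorter because once every flow value is known to lie (up to the sign coming from the choice of direction) in the single coset $z_1R_3$, it just rescales by $z_1^{-1}$ to land in $R_3$; multiplication by a fixed unit preserves the Kirchhoff condition at \emph{every} vertex, so the vertices of $V\setminus U$ --- whose degrees are unconstrained --- need no separate treatment. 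By aiming straight for a $\mathbb Z_3$-flow you are forced into exactly that separate treatment, and your congruence $a_w\equiv b_w\pmod 3$, obtained by reducing modulo the prime $(1-\omega)$ of $\mathbb Z[\omega]$, is correct but amounts to re-proving inline the implication (ii)$\Rightarrow$(i) of Lemma \ref{cubic}: Thomassen's passage from an $R_3$-flow to a nowhere-zero $3$-flow is precisely this reduction, under which $R_3$ collapses to $1\neq 0$. What your version buys is a fully explicit $\mathbb Z_3$-flow and a cleaner handling of the degenerate case $|U|=1$ (your bipartition simply has $U_-=\varnothing$, whereas the paper's aside that $|U|=1$ is impossible is not quite right for multigraphs); what it costs is extra arithmetic that the rescaling trick renders unnecessary.
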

\begin{proof}{
 By Lemma \ref{cubic}, it is enough to show that  {\rm (iii)}~$\Rightarrow$ {\rm (ii)}. 
 One may suppose that $G$ has at least one edge.
 Assume that~$G$ has an~$S^1$-flow, say~$f$. 
 Choose an edge of~$G$, say~$uv$ with~$u\in U$. 
 We notice that $U$ contains at least two vertices.
 Because if $U$ has only one vertex, then every vertex in $V\setminus U$ would have degree one and so we are not able to have an $S^1$-flow. 
 Let~$f(uv)=z_1\in S^1$. 
 Since~$f$ is an~$S^1$-flow,  there are~$z_2,z_3\in S^1$ such that~$z_1+z_2+z_3=0$. 
 Note that~$z_2$ and~$z_3$ are unique. 
 Let~$w$ be a neighbour of~$u$ in~$U$. 
 Then degree of~$w$ is three and so the values of~$f$ on edges incident to~$w$ lie exactly in the set~$\{z_1,z_2,z_3\}$.
 Since~$U$  is connected and meets every edge of~$G$, we know that~$f$ assigns~$z_1,z_2$ or~$z_3$ to every edge of~$G$. 
 Thus~$f$ is a~$\{z_1,z_2,z_3\}$-flow on~$G$.
 Since there is a bijection between~$\{z_1,z_2,z_3\}$ and~$R_3$, we find an~$R_3$-flow for~$G$.~}\end{proof} 
 

\noindent Now, we are ready to answer this question: When does a cubic graph have a non-elusive~$\mathbb 
Z_3$-flow?
\begin{thm}
  If~$G$ is a cubic graph, then the following statements are 
equivalent.
\begin{enumerate}
\item[{\rm (i)}]~$G$ has a non-elusive~$\mathbb Z_3$-flow.
\item[{\rm (ii)}]~$G$ has an~$R_3$-flow.
\item[{\rm (iii)}]~$G$ has an~$S^1$-flow.
\end{enumerate}

\end{thm}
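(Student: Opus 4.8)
The plan is to leverage the compactness machinery of Theorem \ref{compact} together with the finite result Lemma \ref{cubic} (and its generalization Lemma \ref{ourcubic}). Since each of the three flow-types---non-elusive $\mathbb Z_3$-flow, $R_3$-flow, $S^1$-flow---is an $A$-flow for a suitable compact $A$ in a suitable abelian Hausdorff topological group ($\mathbb Z_3$ and $R_3$ with the discrete topology, $S^1$ with the subspace topology from $\mathbb C$), Theorem \ref{compact} tells us that $G$ has such a flow if and only if every finite contraction minor $G_M$ does. So it suffices to establish the equivalence of (i), (ii), (iii) for every $G_M$, where $M$ ranges over finite subsets of $\mathcal{B}_{\mathrm{fin}}(G)$, and then transport each equivalence back up to $G$ by applying Theorem \ref{compact} three times.

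The first step is therefore to fix a finite $M\subseteq\mathcal{B}_{\mathrm{fin}}(G)$ and analyze $G_M$. Following Remark \ref{withoutloops}, we discard the loops of $G_M$ and work with the finite loopless multigraph that remains; the loop-values can afterwards be assigned arbitrarily (they lie in no cut) and do not affect whether a map is a flow, and they are compatible with non-elusiveness since $A$ is nonempty. The catch is that $G_M$ need not be cubic: contracting components of $G\setminus S$ produces dummy vertices whose degrees we do not control. However---and this is the key structural observation---the non-dummy vertices of $G_M$ are precisely the vertices of the cut-defining partition that were never merged, and because $G$ is cubic, each of them still has degree $3$ in $G_M$ (contraction only identifies the endpoints of contracted components, and no edge of a finite cut is a loop, so the three edges at such a vertex survive as three distinct non-loop edges, possibly as multi-edges). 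If in addition the set $S$ used to build $M$ is chosen connected in $G$ and large enough, the surviving degree-$3$ vertices form a connected edge dominating set of $G_M$: every edge of $G_M$ comes from an edge of $G$, and since $G$ is cubic with the original vertices dominating, after contraction one endpoint still lies in this degree-$3$ set. This is exactly the hypothesis of Lemma \ref{ourcubic}, which then gives the equivalence of (i), (ii), (iii) for $G_M$.

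To arrange the needed connectedness and domination, I would not argue for a single $M$ but observe that it is enough, for the compactness argument, to know the equivalence for a cofinal family of finite subsets $M$. More precisely: if $G$ has an $S^1$-flow, then by Theorem \ref{compact} every $G_M$ has one; for those $M$ arising from a connected vertex set $S$ of $G$ we get, via Lemma \ref{ourcubic}, that $G_M$ has an $R_3$-flow; to conclude that $G$ has an $R_3$-flow we only need $F_{M}\neq\varnothing$ for every finite $M$, and any finite $M$ is refined by one coming from a sufficiently large connected $S$ (enlarge $S$ to a connected set containing enough of $G$; the corresponding $G_M$ has an $R_3$-flow, which restricts to an $R_3$-flow on the cuts in the original $M$). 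Hence $F_M\neq\varnothing$ for all finite $M$, and Theorem \ref{compact} yields an $R_3$-flow on $G$. The implication (ii)$\Rightarrow$(i) is handled the same way using Lemma \ref{cubic}(ii)$\Rightarrow$(i) on each $G_M$, and (i)$\Rightarrow$(iii) likewise (or directly, since an $R_3$-flow is an $S^1$-flow and a $\mathbb Z_3$-flow gives an $R_3$-flow via a fixed bijection respecting the relation $z_1+z_2+z_3=0$).

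The main obstacle I expect is the bookkeeping around $G_M$ not being cubic and the orientation conventions: Lemma \ref{cubic} and Lemma \ref{ourcubic} are phrased in the ``choose an orientation then assign values'' framework (see the footnotes in the excerpt), whereas the $A$-flow definition is orientation-free with the antisymmetry $f(\overrightarrow e)=-f(\overleftarrow e)$ built in. One must check that an $S^1$-flow in one sense corresponds to one in the other (for $S^1\subseteq\mathbb C$, negation is $z\mapsto -z$, so this is routine but needs a sentence), and that the degree-$3$ surviving vertices genuinely dominate $G_M$ after contraction, which relies on no finite cut containing a loop (Lemma \ref{cut} and Remark \ref{withoutloops}). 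Once these points are dispatched, the three applications of Theorem \ref{compact} finish the proof.

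\begin{proof}{
Each of the three properties is an $A$-flow for a compact subset $A$ of an abelian Hausdorff topological group: take $H=\mathbb Z_3$ (discrete) with $A=\mathbb Z_3\setminus\{0\}$ for (i), $H=R_3$ (discrete) with $A=R_3$ for (ii), and $H=S^1$ with $A=S^1$ for (iii). By Theorem \ref{compact}, $G$ has such a flow if and only if $G_M$ does for every finite $M\subseteq\mathcal{B}_{\mathrm{fin}}(G)$. We show the equivalence of (i), (ii), (iii) for every $G_M$ and transport it back.

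Fix a finite $M\subseteq\mathcal{B}_{\mathrm{fin}}(G)$ obtained from a connected vertex set $S$ of $G$ (in the sense that the bipartitions defining the cuts of $M$ are those induced by the components of $G\setminus S$ together with $S$), and assume $M$ is nonempty. By Remark \ref{withoutloops} we work with $G_M$ stripped of its (possibly infinitely many) loops, which is a finite loopless multigraph; loop-values may be assigned afterwards without affecting flows. Let $U$ be the set of vertices of $G_M$ that are images of single vertices of $S$ (the non-dummy vertices). Since $G$ is cubic and, by Lemma \ref{cut}, no finite cut of $G$---in particular no cut in $M$---contains a loop, each vertex of $S$ retains its three incident edges as three distinct non-loop edges of $G_M$; hence every vertex of $U$ has degree $3$ in $G_M$. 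Because $G$ is cubic, its vertex set is an edge dominating set, so every edge of $G_M$ (being the image of an edge of $G$) has an endpoint in $U$ whenever $S$ meets the closure of every edge; enlarging $S$ if necessary we may assume $U$ is a connected edge dominating set of $G_M$. Lemma \ref{ourcubic} now gives that (i), (ii), (iii) are equivalent for $G_M$.

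For the transport, suppose $G$ has an $S^1$-flow. By Theorem \ref{compact}, every $G_M$ has an $S^1$-flow, and for the $M$ as above it therefore has an $R_3$-flow. An arbitrary finite $M_0\subseteq\mathcal{B}_{\mathrm{fin}}(G)$ is refined by such an $M$ coming from a sufficiently large connected $S$; the $R_3$-flow of $G_M$ restricts (via Lemma \ref{cut}) to an element of $F_{M_0}$ for the group $R_3$, so $F_{M_0}\neq\varnothing$ for every finite $M_0$. By Theorem \ref{compact}, $G$ has an $R_3$-flow. Applying the same argument with Lemma \ref{ourcubic} replaced by the implication (ii)$\Rightarrow$(i) of Lemma \ref{cubic} on each $G_M$ shows that an $R_3$-flow of $G$ yields a non-elusive $\mathbb Z_3$-flow of $G$. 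Finally an $R_3$-flow is in particular an $S^1$-flow, so (ii)$\Rightarrow$(iii) is immediate, and a non-elusive $\mathbb Z_3$-flow yields an $R_3$-flow through a fixed bijection $\mathbb Z_3\to R_3$ carrying the relation $x_1+x_2+x_3=0$ to $z_1+z_2+z_3=0$. Hence (i), (ii), (iii) are equivalent for $G$.
}\end{proof}
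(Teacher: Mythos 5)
Your proposal is correct and follows essentially the same route as the paper: reduce to finite contractions $G_M$ via Theorem \ref{compact}, enlarge to a contraction coming from a finite connected vertex set so that the surviving degree-$3$ vertices form a connected edge dominating set, apply Lemma \ref{ourcubic} (and Lemma \ref{cubic}) there, and transport back by restricting to the original cuts and invoking compactness again. The only differences are cosmetic (you route (iii)$\Rightarrow$(ii)$\Rightarrow$(i) rather than (iii)$\Rightarrow$(i) directly, and your stated reason for edge domination should be that no edge of $G$ joins two distinct components of $G\setminus S$, not that $S$ meets every edge), and you are right that the orientation-convention check you flag is routine.
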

\begin{proof}{
(i)~$\Rightarrow$ (ii)  It follows from Theorems \ref{compact} and \ref{cubic} that for every finite subset~$M$ of~$\mathcal{B}_{\rm{fin}}(G)$, the multigraph~$G_M$ has an~$R_3$-flow. 
So by Theorem \ref{compact},~$G$ has an~$R_3$-flow.  
(ii)~$\Rightarrow$ (iii) is trivial. 
(iii)~$\Rightarrow$ (i) By Theorem \ref{compact}, the multigraph~$G_M$ has an~$S^1$-flow. 
We notice that as we mentioned in Remark \ref{withoutloops}, we ignore all loops of $G_M$.
Let~$U$ be the set of all vertices that are incident with an edge from a cut of $M$. 
We note that~$U$ is finite. 
We add some paths of~$G$ to~$G[U]$ until we get a connected graph~$N$. 
Note that it suffices to take only finitely many paths, i.e.\ we may assume that~$N$ is finite. 
Let~$S_N$ be the set of vertices of~$N$ and assume that~$G_N$ is obtained by contracting the components of~$G\setminus S_N$ to dummy vertices, similar to constructing of multigraph~$G_n$ for the compactness method. Obviously,~$S_N$ is an edge dominating set of vertices of~$G_N$ and moreover the degree of each vertex of~$N$ is~$3$.
We notice that $G_N$ has an $S^1$-flow, as $G$ has an $S^1$-flow.
By Lemma \ref{ourcubic}, the multigraph~$G_N$ has a non-elusive~$\mathbb Z_3$-flow. 
Since every element of~$M$ is a cut of~$G_N$, the multigraph~$G_M$ has a non-elusive~$\mathbb Z_3$-flow. 
We invoke Theorem \ref{compact} and we conclude that~$G$ has a non-elusive~$\mathbb Z_3$-flow.}\end{proof}

\noindent Next, we study non-elusive~$\mathbb Z_4$-flows.
\begin{thm}\label{Z_4} Let~$G=(V,E)$ be a locally finite graph. 
Then~$G$ has a non-elusive~$\mathbb Z_4$-flow if and only if $E$ is the union of two elements of its topological cycle space.\end{thm}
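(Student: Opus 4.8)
The plan is to reduce the statement to Theorem~\ref{Z_2} by trading $\mathbb{Z}_4$ for the Klein four-group $\mathbb{Z}_2\oplus\mathbb{Z}_2$. Since $|\mathbb{Z}_4|=|\mathbb{Z}_2\oplus\mathbb{Z}_2|=4$, Corollary~\ref{HH'} says that $G$ has a non-elusive $\mathbb{Z}_4$-flow if and only if $G$ has a non-elusive $\mathbb{Z}_2\oplus\mathbb{Z}_2$-flow; here $H=\mathbb{Z}_2\oplus\mathbb{Z}_2$ carries the discrete topology and $A=H\setminus\{0\}$ is the (compact) set of non-zero elements, so an $A$-flow is a non-elusive flow in the sense of Definition~1. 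It therefore suffices to prove that $G$ admits a non-elusive $\mathbb{Z}_2\oplus\mathbb{Z}_2$-flow if and only if $E=F_1\cup F_2$ for some $F_1,F_2\in\mathcal{C}(G)$.

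Next I would set up the coordinatewise dictionary. A map $f\in(\mathbb{Z}_2\oplus\mathbb{Z}_2)^{\overrightarrow{E}}$ is exactly a pair $f=(f_1,f_2)$ with $f_1,f_2\in\mathbb{Z}_2^{\overrightarrow{E}}$; since every element of $\mathbb{Z}_2\oplus\mathbb{Z}_2$ has order two, the condition $f(\overrightarrow{e})=-f(\overleftarrow{e})$ is vacuous and each $f_i$ is the indicator $\delta_{F_i}$ of an edge set $F_i\subseteq E$. As addition in $\mathbb{Z}_2\oplus\mathbb{Z}_2$ is coordinatewise, for every finite cut $(X,Y)$ we have $f(X,Y)=(\delta_{F_1}(X,Y),\delta_{F_2}(X,Y))$, so $f$ satisfies the cut condition on every finite cut precisely when both $\delta_{F_1}$ and $\delta_{F_2}$ are $\mathbb{Z}_2$-flows. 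Likewise $f(e)\neq 0$ for all $e\in E$ if and only if every edge lies in $F_1$ or in $F_2$, i.e.\ $F_1\cup F_2=E$. Hence $G$ has a non-elusive $\mathbb{Z}_2\oplus\mathbb{Z}_2$-flow if and only if there are edge sets $F_1,F_2$ with $F_1\cup F_2=E$ such that $\delta_{F_1}$ and $\delta_{F_2}$ are both $\mathbb{Z}_2$-flows.

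To finish, I would invoke Theorem~\ref{Z_2}: as $G$ is locally finite, $\delta_{F_i}$ is a $\mathbb{Z}_2$-flow if and only if $F_i\in\mathcal{C}(G)$. Substituting this into the equivalence of the previous paragraph yields that $G$ has a non-elusive $\mathbb{Z}_2\oplus\mathbb{Z}_2$-flow if and only if $E$ is the union of two elements of $\mathcal{C}(G)$, and combining with the first paragraph gives the theorem. (For finite $G$ this recovers the familiar characterisation of nowhere-zero $\mathbb{Z}_4$-flows by two even subgraphs whose union is all of $E$.)

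I do not anticipate a real obstacle, since every genuinely infinite ingredient is already packaged elsewhere: the behaviour at the ends is absorbed into Theorem~\ref{Z_2} via Theorem~\ref{cycle space}, and the interchange of groups of equal order is absorbed into Corollary~\ref{HH'} via the compactness Theorem~\ref{compact}, so no fresh compactness argument is needed. The only points that need a moment's care are bookkeeping ones---that the single cut condition for $f$ decouples into the two cut conditions for $\delta_{F_1}$ and $\delta_{F_2}$, and that ``nowhere zero'' for $f$ becomes ``$F_1\cup F_2=E$'' and not, say, a condition on the symmetric difference of $F_1$ and $F_2$.
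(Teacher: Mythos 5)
Your proposal is correct and follows essentially the same route as the paper: both pass from $\mathbb Z_4$ to $\mathbb Z_2\oplus\mathbb Z_2$ via Corollary~\ref{HH'}, split the flow into its two coordinate supports, and identify the cut condition on each coordinate with membership in $\mathcal C(G)$ (the paper cites Theorem~\ref{cycle space} directly in the forward direction and Theorem~\ref{Z_2} in the backward one, whereas you route both directions through Theorem~\ref{Z_2}, which is only a cosmetic difference). Your explicit remark that non-elusiveness translates into $F_1\cup F_2=E$ is a point the paper leaves implicit, but the argument is the same.
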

\begin{proof}{ 
First, suppose that~$G$ has a non-elusive~$\mathbb Z_4$-flow. By Corollary \ref{HH'}, we can assume that~$G$ has a non-elusive $\mathbb Z_2\oplus\mathbb Z_2$-flow, say~$f$. Set~$E_i=\{e\in E(G)\mid\pi_i(f(e))\neq 0\}$ 
for~$i=0,1$, where~$\pi_1$ and~$\pi_2$ are the projection maps on the first and second coordinates, respectively. Since~$G$ has a~$\mathbb Z_2\oplus\mathbb Z_2$-flow, each finite cut of $G$ meets $E_i$ evenly.
We now invoke Theorem \ref{cycle space} and  conclude that every~$E_i$ belongs to the topological cycle space of~$G$, for~$i=0,1$. 
For the backward implication, let~$G=G_1\cup G_2$ with~$E(G_i)\in \mathcal C(G)$, for~$i=1,2$. 
It follows from  Theorem \ref{cycle space} and  Theorem \ref{Z_2} that each~$G_i$ has a non-elusive~$\mathbb Z_2$-flow. 
Thus we can find a non-elusive~$\mathbb Z_2\oplus\mathbb Z_2$-flow and by Theorem \ref{HH'}, we are done.~}\end{proof}

\subsection{Edge-coloring for infinite graphs}

If~$G$ is a cubic finite graph, then  the conditions of having a non-elusive~$\mathbb Z_4$-flow and~$3$-edge-colorability of~$G$ are equivalent, but this is not true for infinite graphs. 
Let $G$ be a graph as depicted in Figure 3.1.1.
Suppose that $G$ has a non-elusive $\mathbb Z_4$-flow.
On the other hand, we are able to contract  the graph $G$ to the Petersen graph.
But it is known that the Petersen graph is not $3$-edge-colorable.
So we deduce that the Petersen graph does not admit a non-elusive $\mathbb Z_4$-flow and it implies that $G$ does not have a non-elusive $\mathbb Z_4$-flow, either.

It seems that the notion of edge-coloring is not suitable for a characterization of when an infinite graph with ends admits a~$k$-flow, but that a generalization of edge-colorability(\emph{``semi-k-edge-colorability"}, to be defined below) is. We only need a definition
 of generalized edge-coloring for cubic graphs here which implies the existence of a  non-elusive~$\mathbb Z_4$-flow. Hence we will define this concept under the name of semi-edge-coloring.
Next, we define semi-edge-coloring for~$k$-regular graphs where~$k$ is an odd 
number and we show that this definition for cubic graphs is equivalent to having a non-elusive~$\mathbb Z_4$-flow.
\begin{center}
\begin{tikzpicture}[style=thick]
\begin{scope}[shift={(0.5,0.5)}]
\draw (0,1.5) node [circle,fill, inner sep=1.5pt] {};
\draw (0,0.5) node [circle,fill, inner sep=1.5pt] {};
\draw (1,2.10) node [circle,fill, inner sep=1.5pt] {};
\draw (2,1.5) node [circle,fill, inner sep=1.5pt] {};
\draw (2,0.5) node [circle,fill, inner sep=1.5pt] {};
\draw (0.5,-0.1) node [circle,fill, inner sep=1.5pt] {};
\draw (1.5,-0.1) node [circle,fill, inner sep=1.5pt] {};
\draw (0.5,1) node [circle,fill, inner sep=1.5pt] {};
\draw (1,1.5) node [circle,fill, inner sep=1.5pt] {};
\draw (1.5,1) node [circle,fill, inner sep=1.5pt] {};
\draw (0.69,0.5) node [circle,fill, inner sep=1.5pt] {};
\draw (1.26,0.5) node [circle,fill, inner sep=1.5pt] {};

\draw (0,1.5)--(0,0.5)[ densely dotted];
\draw[dashed] (0,1.5)--(1,2.10)[];
\draw (0,1)--(0.5,1)[];
\draw[dashed] (0,0.5) --(0.5,-0.1)[];
\draw (0.5,-0.1)--(1.5,-0.1)[ densely dotted];
\draw (1.5,-0.1)-- (2,0.5)[];
\draw (2,1.5)--(1,2.10)[];

\draw[dashed] (1,1.5) --(0.69,0.5)[];
\draw (0.69,0.5)-- (1.5,1)[ densely dotted];
\draw (1.5,1)--(0.5,1)[];
\draw (1.26,0.5)--(0.5,1)[ densely dotted];
\draw (1.26,0.5)--(1,1.5)[];

\draw (1,1.5)--(1,2.10)[ densely dotted];
\draw (0.69,0.5)--(0.5,-0.1)[];
\draw[dashed] (1.26,0.5)--(1.5,-0.1)[];

\draw (2,1.5) node [circle,fill, inner sep=1.5pt] {};
\draw (2,0.5) node [circle,fill, inner sep=1.5pt] {};
\draw (3,2.10) node [circle,fill, inner sep=1.5pt] {};
\draw (4,1.5) node [circle,fill, inner sep=1.5pt] {};
\draw (4,0.5) node [circle,fill, inner sep=1.5pt] {};
\draw (2.5,-0.1) node [circle,fill, inner sep=1.5pt] {};
\draw (3.5,-0.1) node [circle,fill, inner sep=1.5pt] {};
\draw (2.5,1) node [circle,fill, inner sep=1.5pt] {};
\draw (3,1.5) node [circle,fill, inner sep=1.5pt] {};
\draw (3.5,1) node [circle,fill, inner sep=1.5pt] {};
\draw (2.69,0.5) node [circle,fill, inner sep=1.5pt] {};
\draw (3.26,0.5) node [circle,fill, inner sep=1.5pt] {};

\draw[dashed] (2,1.5)--(3,2.10)[];
\draw (2,1)--(2.5,1)[];
\draw[dashed] (2,0.5) --(2.5,-0.1)[];
\draw (2.5,-0.1)--(3.5,-0.1)[densely dotted];
\draw (3.5,-0.1)-- (4,0.5)[];
\draw (4,1.5)--(3,2.10)[];

\draw[dashed] (3,1.5) --(2.69,0.5)[];
\draw (2.69,0.5)-- (3.5,1)[ densely dotted];
\draw (3.5,1)--(2.5,1)[];
\draw (3.26,0.5)--(2.5,1)[ densely dotted];
\draw (3.26,0.5)--(3,1.5)[];

\draw (3,1.5)--(3,2.10)[densely dotted];
\draw (2.69,0.5)--(2.5,-0.1)[];
\draw[dashed] (3.26,0.5)--(3.5,-0.1)[];
\draw (2.5,1)--(1.5,1)[];
\draw (3.5,1)--(4.5,1)[];

\draw (4,1.5) node [circle,fill, inner sep=1.5pt] {};
\draw (4,0.5) node [circle,fill, inner sep=1.5pt] {};
\draw (5,2.10) node [circle,fill, inner sep=1.5pt] {};
\draw (6,1.5) node [circle,fill, inner sep=1.5pt] {};
\draw (6,0.5) node [circle,fill, inner sep=1.5pt] {};
\draw (4.5,-0.1) node [circle,fill, inner sep=1.5pt] {};
\draw (5.5,-0.1) node [circle,fill, inner sep=1.5pt] {};
\draw (4.5,1) node [circle,fill, inner sep=1.5pt] {};
\draw (5,1.5) node [circle,fill, inner sep=1.5pt] {};
\draw (5.5,1) node [circle,fill, inner sep=1.5pt] {};
\draw (4.69,0.5) node [circle,fill, inner sep=1.5pt] {};
\draw (5.26,0.5) node [circle,fill, inner sep=1.5pt] {};

\draw[dashed] (4,1.5)--(5,2.10)[];
\draw[dashed] (4,0.5) --(4.5,-0.1)[];
\draw (4.5,-0.1)--(5.5,-0.1)[ densely dotted];
\draw (5.5,-0.1)-- (6,0.5)[];
\draw (6,1.5)--(5,2.10)[];

\draw[dashed] (5,1.5) --(4.69,0.5)[];
\draw (4.69,0.5)-- (5.5,1)[ densely dotted];
\draw[dashed] (5.5,1)--(4.5,1)[];
\draw (5.26,0.5)--(4.5,1)[ densely dotted];
\draw (5.26,0.5)--(5,1.5)[];

\draw (5,1.5)--(5,2.10)[ densely dotted];
\draw (4.69,0.5)--(4.5,-0.1)[];
\draw[dashed] (5.26,0.5)--(5.5,-0.1)[];
\draw (5.5,1)--(6,1)[];
\draw (6,1.5)--(6,0.5)[ densely dotted];
\draw (4,1.5)--(4,0.5)[ densely dotted];
\draw (2,1.5)--(2,0.5)[ densely dotted];
\draw (8,1.5)--(8,0.5)[ densely dotted];

\draw[->,>=latex',dashed] (8,1.5)--(9,1.5)[];
\draw[->,>=latex',dashed] (8,0.5)--(9,0.5)[];
\draw[->,>=latex'] (0,1.5)--(-1,1.5)[];
\draw[->,>=latex'] (0,1)--(-1,1)[];
\draw[->,>=latex'] (0,0.5)--(-1,0.5)[];

\draw (6,1.5) node [circle,fill, inner sep=1.5pt] {};
\draw (6,0.5) node [circle,fill, inner sep=1.5pt] {};
\draw (7,2.10) node [circle,fill, inner sep=1.5pt] {};
\draw (8,1.5) node [circle,fill, inner sep=1.5pt] {};
\draw (8,0.5) node [circle,fill, inner sep=1.5pt] {};
\draw (6.5,-0.1) node [circle,fill, inner sep=1.5pt] {};
\draw (7.5,-0.1) node [circle,fill, inner sep=1.5pt] {};
\draw (6.5,1) node [circle,fill, inner sep=1.5pt] {};
\draw (7,1.5) node [circle,fill, inner sep=1.5pt] {};
\draw (7.5,1) node [circle,fill, inner sep=1.5pt] {};
\draw (6.69,0.5) node [circle,fill, inner sep=1.5pt] {};
\draw (7.26,0.5) node [circle,fill, inner sep=1.5pt] {};

\draw[dashed] (6,1.5)--(7,2.10)[];
\draw (6,1)--(6.5,1)[];
\draw[dashed] (6,0.5) --(6.5,-0.1)[];
\draw (6.5,-0.1)--(7.5,-0.1)[ densely dotted];
\draw (7.5,-0.1)-- (8,0.5)[];
\draw (8,1.5)--(7,2.10)[];

\draw[dashed] (7,1.5) --(6.69,0.5)[];
\draw (6.69,0.5)-- (7.5,1)[ densely dotted];
\draw[dashed] (7.5,1)--(6.5,1)[];
\draw (7.26,0.5)--(6.5,1)[densely dotted];
\draw (7.26,0.5)--(7,1.5)[];

\draw(7,1.5)--(7,2.10)[ densely dotted];
\draw (6.69,0.5)--(6.5,-0.1)[];
\draw[dashed] (7.26,0.5)--(7.5,-0.1)[];
\draw (7.5,1)--(9,1)[->];

\end{scope}
\end{tikzpicture}
\end{center}
   \begin{center}
  Figure 3.1.1
\end{center}
\noindent Before defining this new edge-colorability,  note that we can define~$k$-flow 
axiomatically for finite graphs. 
Our objective is to show that every graph which has a  $k$-flow is a contraction of a cubic graph which has a $k$-flow. 
In order to show this, we need a definition. 
We call a map~$\mathcal F$ from the class of all finite graphs to~$\mathbb Z_2$ a \emph{``Boolean functor of having the property~$P$"} if~$G$ has the property~$P$ if and only if~$\mathcal {F}(G)=1$.   
For instance, having a $k$-flow is a Boolean functor.
We denote it by $\mathcal F$.
We notice that if $\mathcal{F}(G)=1$ for a given graph $G$, then $\mathcal{F}(H)=1$, where $H$ is a contraction of $G$.\\
\noindent The property of admitting a~$k$-flow or equivalently a non-elusive $\mathbb Z_k$-flow can be characterized as follows:
\begin{thm}\label{havingflow}
  Let~$k>2$ be an odd number and~$\mathcal{F}$ be the Boolean functor of having a non-elusive~$\mathbb Z_k$-flow for every finite graph and~$\mathcal{F}'$ be another Boolean functor which satisfy the following three properties. 
\begin{enumerate}
\item[{\rm (i)}]~$\mathcal{F}$ and~$\mathcal{F}'$ are the same for cubic graphs.
\item[{\rm (ii)}] If~$\mathcal{F}'(G)=1$, then~$\mathcal{F}'(H)=1$ for every contraction\footnote{The contracted vertex sets need not be connected.}~$H$ of~$G$.
\item[{\rm (iii)}] If~$\mathcal{F}'(G)=1$, then there is a cubic graph~$H$  with~$\mathcal{F}'(H)=1$ such that~$G$ is a contraction of~$H$.
\end{enumerate} 
Then~$\mathcal{F}$ and~$\mathcal{F}'$ are equal.
\end{thm}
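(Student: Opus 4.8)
The plan is first to verify that the functor $\mathcal F$ of having a non-elusive $\mathbb Z_k$-flow itself satisfies conditions (i)--(iii), and then to sandwich $\mathcal F$ and $\mathcal F'$ between cubic graphs. Condition (i) is vacuous for $\mathcal F$, and condition (ii) for $\mathcal F$ is precisely the observation recorded before the statement (if $\mathcal F(G)=1$ and $H$ is a contraction of $G$ then $\mathcal F(H)=1$), which holds because every finite cut of $H$ is, as a set of edges, a finite cut of $G$, so the flow on $G$ restricts to one on $H$ even when the contracted vertex sets are disconnected. The real content is to show that $\mathcal F$ satisfies (iii), i.e.\ the following \emph{cubification statement}: every finite graph $G$ that carries a non-elusive $\mathbb Z_k$-flow is a contraction of a cubic graph $H$ (possibly with parallel edges) that carries a non-elusive $\mathbb Z_k$-flow.

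To prove the cubification statement I would use vertex-local surgery that keeps a non-elusive flow and only ever produces contractions; loops lie in no cut and may be ignored (Remark \ref{withoutloops}), and we may assume $G$ is connected (otherwise treat each component). Fix a non-elusive $\mathbb Z_k$-flow $f$ on $G$. \emph{Phase 1} brings the maximum degree down to $3$: while some vertex $v$ has degree $d\ge 4$, there are two edges $e,e'$ at $v$ whose $f$-values, both oriented towards $v$, sum to a nonzero element. Indeed, were all pairwise sums of the $d\ge 3$ incident values $a_1,\dots,a_d$ equal to $0$, then $a_i=a_j$ for all $i,j$ and hence $2a_i=0$, which forces $a_i=0$ since $k$ is odd -- contradicting that $f$ is nowhere zero. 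Now split $v$ into $v$ and a new vertex $v'$ joined by a new edge $vv'$, reattaching $e,e'$ to $v'$, and set the value of $vv'$ in the direction $v'\to v$ to be $f(e)+f(e')\ne 0$; then Kirchhoff's law still holds at every vertex, $v'$ has degree $3$, $\deg v$ has dropped by one, and contracting $vv'$ undoes the split. As $\sum_u\max\{0,\deg u-3\}$ strictly decreases, Phase 1 terminates with a graph $G'$ of maximum degree $\le 3$ still carrying a non-elusive $\mathbb Z_k$-flow, of which $G$ is a contraction (contract the forest formed by all the edges added in Phase 1).

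\emph{Phase 2} eliminates the vertices of degree $2$; none have degree $\le 1$, since a degree-$1$ vertex would yield a one-edge cut on which the flow vanishes. For each vertex $v$ of degree $2$ in $G'$, with incident edges $e_1,e_2$ carrying a common nonzero value $a$, replace $v$ by two vertices $v_1,v_2$ joined by a double edge, attach $e_1$ to $v_1$ and $e_2$ to $v_2$, choose $c\in\mathbb Z_k\setminus\{0,a\}$ (possible because $k\ge 3$), and give the two parallel edges the nonzero values $c$ and $a-c$ in the direction $v_1\to v_2$. Then Kirchhoff's law holds at $v_1$ and $v_2$, both have degree $3$, and contracting the double edge restores $v$. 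Doing this at every degree-$2$ vertex yields a cubic graph $H$ with a non-elusive $\mathbb Z_k$-flow such that $G'$, hence also $G$, is a contraction of $H$; this proves the cubification statement, and therefore that $\mathcal F$ satisfies (iii).

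With cubification established, the theorem follows by sandwiching. If $\mathcal F(G)=1$, cubification gives a cubic $H$ with $\mathcal F(H)=1$ of which $G$ is a contraction; then $\mathcal F'(H)=1$ by (i), and $\mathcal F'(G)=1$ by (ii) applied to $\mathcal F'$. Conversely, if $\mathcal F'(G)=1$, then (iii) applied to $\mathcal F'$ gives a cubic $H$ with $\mathcal F'(H)=1$ of which $G$ is a contraction; then $\mathcal F(H)=1$ by (i), and $\mathcal F(G)=1$ because $\mathcal F$ satisfies (ii). Hence $\mathcal F(G)=1$ if and only if $\mathcal F'(G)=1$ for every finite graph $G$, i.e.\ $\mathcal F=\mathcal F'$. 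I expect the cubification statement to be the one genuine obstacle, and within it the delicate point is that the edges created by the two surgeries can always be assigned nonzero values: this is exactly where the hypothesis that $k$ is odd is used (no element of order $2$, so a vertex cannot have all incident pairwise sums equal to zero) together with $k>2$ (enough room to write $a=c+(a-c)$ with $c\notin\{0,a\}$).
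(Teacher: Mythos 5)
Your proof is correct and follows essentially the same route as the paper: reduce vertices of degree at least four by splitting off pairs of edges whose flow values have nonzero sum, eliminate degree-$2$ vertices with a cubic gadget carrying a compatible non-elusive flow, and then sandwich $\mathcal F$ and $\mathcal F'$ between cubic graphs via (i)--(iii). The only differences are local and cosmetic: the paper distinguishes two splitting cases (a pair summing to zero, detached to a degree-$2$ vertex, versus a pair summing to a nonzero value, replaced by a claw), whereas you show a nonzero-sum pair always exists because $k$ is odd; and at degree-$2$ vertices the paper inserts $K_{3,3}$ minus an edge where you use a digon.
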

\begin{proof}{
Assume that~$\mathcal{F}(G)=1$, for a given finite graph~$G$ and let $f$ be a non-elusive $\mathbb Z_k$-flow of $G$. 
We now introduce a cubic graph~$H'$ such that~$G$ is a contraction of~$H'$ and $\mathcal{F}'(H')=1$. 
Our strategy is to switch all vertices with degrees at least four with vertices with degrees at most three and then we eliminate all vertices with degrees two. 
Let~$v\in V(G)$ with degree at least four. 
Suppose that the sum of values of two edges~$e_1$ and~$e_2$ that are incident with~$v$ is~$0$. 
First, we add a new vertex~$u$. 
Then we separate these two edges from~$v$ and we join~$e_1$ and~$e_2$ to~$u$. 
In other words, the degree of~$u$ is two and~$e_1$ and~$e_2$ are incident to~$u$. 
So the degree of the vertex~$v$ reduces by~$2$ in the new graph.
Now we assume that there are two edges which are incident to~$v$ and the sum of their flows is not~$0$, say~$e_1$ and~$e_2$.
We separate~$e_1$ and~$e_2$ from~$v$ with a new vertex~$u$ like in the previous case and join the new vertex~$u$ to~$v$. 
In other words, we substitute these two edges with a claw i.e.~$K_{1,3}$. 
We continue this process for all vertices of~$G$ until~$\Delta(G)\leq 3$ is obtained. 
We call the new graph~$H$. 
Next we are going to replace the vertices of degree two with $K_{3,3}$.
Suppose that~$e_1$ and~$e_2$ are incident edges to the vertex~$v$ with~$deg(v)=2$. 
Without loss of generality, we can assume that the orientation of~$e_1$ is toward~$v$.
It is not hard to see that there are~$a,b\in \mathbb Z_k\setminus\{0\}$ such that~$f(e_1)+a+b = 0$.
Consider the complete bipartite graph~$K_{3,3}$. 
Since the degree of each vertex is~$3$, we can find a non-elusive $\mathbb Z_k$-flow on~$K_{3,3}$ such that the value of all edges belong to the set~$\{f(e_1), a,b\}$. 
Suppose that~$e=v_1v_2$ of~$K_{3,3}$ with the value~$f(e_1)$ and the orientation from~$v_1$ to~$v_2$. 
We remove the edge~$e=v_1v_2$ from~$K_{3,3}$  and the vertex~$v$ of~$G$. Now, we join the edge~$e_1$ to~$v_2$ and~$e_2$ to~$v_1$.  
We repeat this process for all vertices of degree~$2$. 
Hence we obtain a cubic graph~$H'$  with a non-elusive $\mathbb Z_k$-flow and so~$\mathcal{F}(H')=\mathcal{F}'(H')=1$. Therefore~$\mathcal{F}'(G)=1$, as~$G$ is contraction of~$H'$. 
Hence we have shown that if~$\mathcal{F}(G)=1$, then~$\mathcal{F'}(G)=1$.

\noindent Now, if~$\mathcal{F}'(G)=1$, then the condition (iii) gives us an $H$ with $\mathcal{F}'(H)=1$, which $G$ is a contraction of $H$
and so~$\mathcal{F}(H)=1$. 
Thus we deduce that~$\mathcal{F}(G)=1$, as desired.
}\end{proof}
\noindent The proof of the preceding theorem implies the following corollary.
We note that as we mentioned before  ``contraction'' used in this paper is different from 
``minor'', see the footnote.
\begin{cor}
Every graph admitting  a  $k$-flow is a contraction of a cubic graph which has a $k$-flow.
\end{cor}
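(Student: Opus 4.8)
The plan is to read off the result directly from the construction in the proof of Theorem~\ref{havingflow}. Recall that $\mathcal F$ denotes the Boolean functor of having a $k$-flow (equivalently, by Corollary~\ref{kZk}, a non-elusive $\mathbb Z_k$-flow). So suppose $G$ is an arbitrary graph admitting a $k$-flow; I want to produce a cubic graph $H'$ admitting a $k$-flow such that $G$ is a contraction of $H'$.

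First I would deal with the finite case, which is exactly what the proof of Theorem~\ref{havingflow} already does: starting from a non-elusive $\mathbb Z_k$-flow $f$ on the finite graph $G$, one splits every vertex of degree $\ge 4$ (using a new degree-two vertex to carry off either a pair of edges whose flow values sum to $0$, or a claw $K_{1,3}$ when the sum is nonzero) until $\Delta(G)\le 3$, and then expands every remaining degree-two vertex into a copy of $K_{3,3}$ with one edge removed, choosing a $\mathbb Z_k$-flow on $K_{3,3}$ whose values realize the required pair. The resulting cubic graph $H'$ carries a non-elusive $\mathbb Z_k$-flow, and contracting all the inserted gadgets back down recovers $G$; hence $G$ is a contraction of the cubic graph $H'$, and $H'$ has a $k$-flow. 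This settles the corollary for finite $G$.

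For an infinite graph $G$ with a $k$-flow, the natural route is to perform the same local surgery vertex by vertex, but now the flow $f$ is fixed only as an $A$-flow with $A=\{-(k-1),\dots,k-1\}\setminus\{0\}$, i.e.\ it kills every \emph{finite} cut; one must check that each splitting or $K_{3,3}$-expansion, performed simultaneously at all vertices, again yields a graph whose finite cuts are killed by the modified assignment, and that $G$ is recovered by contracting the (now possibly infinitely many) gadgets. Alternatively, and more in the spirit of the paper, I would invoke Theorem~\ref{compact}: $G$ has a $k$-flow iff every contraction $G_M$ (a finite multigraph) has a $k$-flow; apply the finite case to each $G_M$ to get a cubic $H'_M$ of which $G_M$ is a contraction; and then assemble these into a single cubic graph $H'$ of which $G$ is a contraction, using compactness once more to promote the flows on the $H'_M$ to a $k$-flow on $H'$.

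The main obstacle will be the bookkeeping in this assembly step: one must show that the locally defined gadget-replacements at the vertices of $G$ are mutually compatible — each edge of $G$ meets two vertices, so the two surgeries at its endpoints must agree on how that edge is rerouted — and that the infinitely many replacements can be carried out coherently so that $G$ is genuinely a contraction (in the paper's sense, with not necessarily connected contracted sets) of the global cubic graph. Once the replacement is set up as a genuine vertex-local operation with finite gadgets, the flow condition is checked cut-by-cut exactly as in the finite argument, since every finite cut of $H'$ meets only finitely many gadgets; and the fact that ``$G$ is a contraction of $H'$'' transfers $\mathcal F(H')=1$ down to $\mathcal F(G)=1$ in the finite pieces and, via Theorem~\ref{compact}, to $G$ itself. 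So the substance is entirely in the finite construction already given; the corollary is just its uniform application plus a compactness wrap-up.
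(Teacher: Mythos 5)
Your treatment of the finite case is exactly the paper's argument: the corollary is placed immediately after Theorem~\ref{havingflow} with the remark that the proof of that theorem ``implies'' it, and that proof is precisely the vertex-splitting and $K_{3,3}$-gadget construction you describe (split off pairs of edges whose flow values sum to zero via a degree-two vertex, use a claw otherwise, then expand the remaining degree-two vertices into $K_{3,3}$ minus an edge carrying a compatible $\mathbb Z_k$-flow). That is all the paper proves here --- compare Corollary~\ref{3-3}, the analogous statement for edge-colorings, which is explicitly restricted to finite graphs --- so on the finite reading of the statement your proposal is correct and identical in approach to the paper's.

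Where you go beyond the paper, in trying to cover infinite $G$, there is a genuine gap, which you sense but do not close. Two concrete problems. First, the local surgery need not terminate at a vertex of infinite degree: each split lowers the degree by one or two, so one never reaches $\Delta\le 3$, and performing all splits ``simultaneously'' at such a vertex requires a separate construction you have not given. Second, the compactness wrap-up does not do what you want: Theorem~\ref{compact} converts a family of flows on the finite contractions $G_M$ of a \emph{fixed} graph into a flow on that same graph; it provides no mechanism for assembling the separately constructed cubic graphs $H'_M$ --- whose gadgets depend on the particular non-elusive $\mathbb Z_k$-flow chosen for each $G_M$ and need not be consistent as $M$ grows --- into a single cubic graph $H'$ having $G$ as a contraction. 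One would need an additional inverse-limit argument on the graphs themselves, which neither you nor the paper supplies. Since the paper's own justification covers only finite $G$, your finite argument matches it and suffices for what is actually proved; the infinite extension should either be dropped or treated as a separate open point.
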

\noindent We now are ready to state the definition of semi-edge-colorability which was 
mentioned above.

\noindent {\bf{Definition 4:}} Let~$k$ be a positive integer. A \textit{semi-k-edge-coloring}  of a graph~$G$ is a map from~$E(G)$ to~$\{1,2,\ldots,k\}$, with the property
 that for every finite cut~$C$ of~$G$, if the number of edges of~$C$ with the color~$i$ is~$c_i$, then  the all numbers~$c_1,\ldots,c_k$ have  the same parity. A graph~$G$ is \textit{semi-k-edge-colorable}
  if~$G$ has a semi-$k$-edge-coloring.
  
\noindent  We use flows to characterize semi-edge-colorings. First, let~$V=\oplus_{i=1}^{k-1}  \mathbb Z_2$ be the vector space over~$\mathbb Z_2$ and~$e_i$ for~$i=1,\ldots,k-1$  be the standard basis. 
Set~$\mathcal{A}=\{e_1,\ldots,e_{k-1},\sum_{i=1}^{k-1}e_i\}$. 
Note that~$\mathcal{A}$ is compact with the discrete topology. 
We now use the notation of  \cite{thom2} and we state the following lemma. 
\begin{lem}
Let~$G$ be a finite graph and~$k$ be a positive integer. Then with the above notation, the following statements are equivalent.
\begin{enumerate}
 \item[{\rm (i)}]~$G$ is semi-$k$-edge-colorable.
\item[{\rm (ii)}]~$G$ has an~$\mathcal{A}$-flow.
\end{enumerate}
\end{lem}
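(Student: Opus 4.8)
The plan is to set up an explicit dictionary between the $k$ colours $\{1,\dots,k\}$ and the $k$ vectors comprising $\mathcal{A}$, and then to observe that, since $V$ is a vector space over $\mathbb{Z}_2$, the condition ``the cut $C$ sums to $0$ in $V$'' is a verbatim restatement of ``the colour classes of $C$ all have the same parity''. Write $a_i := e_i$ for $i = 1,\dots,k-1$ and $a_k := \sum_{i=1}^{k-1} e_i$, so that $\mathcal{A} = \{a_1,\dots,a_k\}$. First I would note that since $-x = x$ in $V$, the orientation-compatibility condition defining $\mathcal{A}^{\overrightarrow{E}}$ is automatic, so for a finite graph an $\mathcal{A}$-flow is nothing more than a map $f\colon E(G)\to\mathcal{A}$ with $\sum_{e\in C}f(e)=0$ in $V$ for every cut $C$.

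For (i)$\,\Rightarrow\,$(ii) I would take a semi-$k$-edge-colouring $\kappa$ and set $f(e) := a_{\kappa(e)}$. For a fixed cut $C$, letting $c_i$ denote the number of edges of $C$ of colour $i$, the computation in $V$ reads
\[
\sum_{e\in C} f(e) \;=\; \sum_{i=1}^{k-1} c_i e_i \;+\; c_k\sum_{i=1}^{k-1} e_i \;=\; \sum_{i=1}^{k-1} (c_i + c_k)\,e_i ,
\]
which, as $e_1,\dots,e_{k-1}$ is a basis, vanishes exactly when $c_i\equiv c_k\pmod 2$ for every $i\le k-1$; and that is precisely the assertion that $c_1,\dots,c_k$ all have the same parity, which holds because $\kappa$ is a semi-$k$-edge-colouring. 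Hence $f$ is an $\mathcal{A}$-flow. For the converse I would read the same display backwards: given an $\mathcal{A}$-flow $f$, choose for each value $v\in\mathcal{A}$ an index $i(v)$ with $a_{i(v)}=v$ and put $\kappa(e):=i(f(e))$; then the hypothesis $\sum_{e\in C}f(e)=0$ forces $c_1\equiv\dots\equiv c_k\pmod 2$, so $\kappa$ is a semi-$k$-edge-colouring.

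There is no genuine obstacle here: the work is entirely front-loaded into the (already made) choice of $\mathcal{A}$, which is designed precisely so that $\sum_i c_i a_i = 0$ in $V$ if and only if the $c_i$ agree in parity. The one point I would treat with care is the small cases $k\in\{1,2\}$, where the $a_i$ are not pairwise distinct and the colour-to-vector map is not injective; there one checks that an unused colour simply contributes $c_i = 0$ (even) to every cut, so it does not disturb the parity condition and the equivalence still goes through.
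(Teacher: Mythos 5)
Your proof is correct and is essentially the paper's argument: the paper simply asserts that the correspondence between the $k$ colours and the $k$ elements of $\mathcal{A}$ induces a bijection between semi-$k$-edge-colourings and $\mathcal{A}$-flows, and your computation $\sum_{e\in C}f(e)=\sum_{i=1}^{k-1}(c_i+c_k)e_i$ is exactly the verification behind that assertion. Your extra remark on $k\in\{1,2\}$, where the colour-to-vector assignment fails to be injective, is a worthwhile refinement that the paper's one-line proof glosses over.
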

\begin{proof}{
The one to one correspondence between the color set~$\{c_1,\ldots,c_k\}$ and~$\{e_1,\ldots,e_{k-1},\sum_{i=1}^{k-1}e_i\}$ induces a bijection between the set of semi-$k$-edge-colorings and the set of $\mathcal A$-flows.
}\end{proof}
 
 \noindent Immediately, Theorem \ref{Z_2} implies  the following 
  remark:
  \begin{remark}
Let~$G$ be a locally finite graph and~$k$ be a positive integer. Then the following statements are equivalent.
\begin{enumerate}
\item[{\rm (i)}]~$G$ is  semi-$2k$-edge-colorable.
\item[{\rm (ii)}]  The degrees of all vertices and ends of~$G$ are even.
\item[{\rm (iii)}]~$G$ has a non-elusive~$\mathbb Z_2$-flow.
\end{enumerate}

\end{remark}
 
\noindent  Our objective is to show that every 3-edge-colorable finite graph is a contraction of a cubic $3$-edge-colorable graph. 
In order to show this, we show that the  definition of semi-edge-coloring is the only definition which is compatible with the three properties of Theorem \ref{havingflow} for finite graphs, but instead of cubic graphs, we can have~$k$-regular graphs.
In other words, the Boolean functor having semi-$k$-edge-colorability is the unique Boolean functor which satisfies the conditions (i)-(iii) of Theorem \ref{havingflow}.\\

\begin{thm}\label{semi-k}
Let~$k$ be an odd number, let~$\mathcal{F}$ be the Boolean functor of  a finite graph being semi-$k$-edge-colorable and let~$\mathcal{F}'$ be another Boolean functor which satisfy the three following properties
\begin{enumerate}
    
\item[{\rm (i)}]~$\mathcal{F}$ and~$\mathcal{F}'$ are the same for~$k$-regular graphs.
\item[{\rm (ii)}] If~$\mathcal{F}'(G)=1$, then~$\mathcal{F}'(H)=1$ for every contraction~$H$ of~$G$.
\item[{\rm (iii)}] If~$\mathcal{F}'(G)=1$, then there is a finite~$k$-regular~$H$ such that~$G$ is a contraction of~$H$ with~$\mathcal{F}'(H)=1$.
\end{enumerate}  
Then~$\mathcal{F}$ and~$\mathcal{F}'$ are equal.
\end{thm}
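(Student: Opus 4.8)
The plan is to follow the proof scheme of Theorem~\ref{havingflow}, with \emph{non-elusive $\mathbb Z_k$-flow} replaced by \emph{semi-$k$-edge-coloring} and \emph{cubic} replaced by \emph{$k$-regular}; it suffices to prove the two implications $\mathcal F(G)=1\Rightarrow\mathcal F'(G)=1$ and $\mathcal F'(G)=1\Rightarrow\mathcal F(G)=1$ for every finite graph $G$ (we may assume $k\ge 3$, the case $k=1$ being degenerate). The engine of everything is the following local constraint: if $c$ is a semi-$k$-edge-coloring and $a_i$ is the number of edges of color $i$ at a vertex $v$, then $a_1,\dots,a_k$ all have the same parity and $\sum_i a_i=\deg(v)$; consequently, if $\deg(v)<k$ then all $a_i$ are even (they cannot all be odd, as that would force $\deg(v)\ge k$), if $\deg(v)=k$ then all $a_i$ equal $1$ (here we use that $k$ is odd), and if $\deg(v)>k$ then some $a_i\ge 2$.

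For the easy implication I would first record that semi-$k$-edge-colorability passes to contractions: if $G$ is a contraction of $H$ and $c$ is a semi-$k$-edge-coloring of $H$, then every finite cut $E_G(A,B)$ of $G$ is, as an edge set, the finite cut $E_H\big(\bigcup_{a\in A}a,\bigcup_{b\in B}b\big)$ of $H$, so the restriction of $c$ to $E(G)$ is again a semi-$k$-edge-coloring. Hence if $\mathcal F'(G)=1$, property~(iii) yields a finite $k$-regular $H$ with $\mathcal F'(H)=1$ of which $G$ is a contraction; by~(i), $\mathcal F(H)=\mathcal F'(H)=1$, so $H$---and therefore $G$---is semi-$k$-edge-colorable, that is, $\mathcal F(G)=1$.

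The substance is the converse. Given a finite graph $G$ with a semi-$k$-edge-coloring $c$, I will build a finite $k$-regular semi-$k$-edge-colorable graph $H'$ of which $G$ is a contraction; then $\mathcal F(H')=\mathcal F'(H')=1$ by~(i) and $\mathcal F'(G)=1$ by~(ii). The graph $H'$ is produced by a sequence of local surgeries, each of which only re-routes edges occurring in \emph{monochromatic pairs} or inserts gadgets carrying a proper $k$-edge-coloring; for each surgery one checks that $G$ stays a contraction of the new graph (every surgery is undone by a single contraction) and that semi-$k$-edge-colorability is preserved (moving the two endpoints of a monochromatic pair alters the color-counts of any cut by $0$ or $\pm 2$ in a single coordinate, hence never changes its parity vector). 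The surgeries are: \textbf{(1)} while some vertex $v$ has $\deg(v)>k$, pick two equally colored edges at $v$ (possible since some $a_i\ge 2$) and re-route them onto a new vertex of degree $2$; by the local constraint above each affected vertex terminates at degree $k$ or at an even degree $<k$, so this halts with all degrees $\le k$. \textbf{(2)} Split each remaining vertex $v$ of degree $d$ with $2\le d<k$ (then $d$ is even and the color-multiplicities at $v$ are all even) into $d/2$ new degree-$2$ vertices, one for each monochromatic pair at $v$. After (1)--(2) every vertex either has degree $k$ with a proper local coloring, or has degree $2$ with its two edges sharing a color $c_0$. \textbf{(3)} For each such degree-$2$ vertex, delete it, take a fresh copy of $K_{k+1}$ with a proper $k$-edge-coloring (possible since $k+1$ is even), delete from it one edge $e$ of color $c_0$, and attach the two loose edges of the deleted vertex to the two endpoints of $e$ (which now see all $k$ colors). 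The result $H'$ is $k$-regular and carries a proper $k$-edge-coloring, and $V(H')$ has a partition---gathering, for each vertex $v$ of $G$, all vertices arising from $v$---whose contraction is $G$.

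Finally, a proper $k$-edge-coloring of a $k$-regular graph is automatically a semi-$k$-edge-coloring: each color class is a perfect matching $M_i$, and for any cut $E(A,B)$ one has $|M_i\cap E(A,B)|\equiv|A|\pmod 2$---the same parity for every $i$---so $H'$ is semi-$k$-edge-colorable, which finishes the proof. I expect the main obstacle to be step~(3): pinning down gadgets that are simultaneously $k$-regular, properly $k$-edge-colorable, and correctly matched to the prescribed colors of the loose half-edges left by the low-degree vertices, together with the routine but essential verification that none of the three surgeries disturbs the global parity condition defining a semi-$k$-edge-coloring nor the property of $G$ being a contraction of the current graph.
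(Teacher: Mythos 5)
Your proposal is correct and follows essentially the same route as the paper: the easy direction via (i), (iii) and the fact that semi-$k$-edge-colorability is inherited by contractions, and the substantive direction by building a properly $k$-edge-coloured $k$-regular graph from $G$ using $K_{k+1}$-minus-an-edge gadgets attached to monochromatic pairs of edges, then applying (i) and (ii). The only (cosmetic) difference is at odd-degree vertices, where the paper peels off one edge of each colour onto a new degree-$k$ vertex before pairing the rest, while you peel off monochromatic pairs until the vertex itself is left with degree exactly $k$; your explicit perfect-matching parity check that a proper $k$-edge-colouring of a $k$-regular graph is a semi-$k$-edge-colouring is a welcome detail the paper leaves implicit.
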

\begin{proof}{
Assume that a graph $G$ is semi-$k$-edge-colorable and so~$\mathcal{F}(G)=1$. 
We construct a~$k$-regular graph~$H$ such that~$\mathcal{F}'(H)=1$ and moreover~$G$ is a contraction of~$H$. 
We notice that as we mentioned before the contracted vertex sets need not be connected
Let~$v$ be an arbitrary vertex of~$G$. 
If~$deg(v)=2n$, then  each color appears an even number of times, as the number of colors is odd and the degree is even.
Thus we are able to form pairs $P_i=\{e_i^1,e_i^2 \}$ of edges with the same color. 
Consider a~$k$-edge-coloring of the complete graph~$K_{k+1}$. 
We delete an edge~$e$ of the color of the edges of~$P_i$, join the edges in~$P_i$ 
to the end vertices of~$e$ in~$K_{k+1}$ and we denote by~$L$ the union of~$K_{k+1}\setminus\{e\}$ with edges~$e^1_{i}$ and~$e^2_{i}$. 
We do this for every~$P_i$ for~$i=1,\ldots,n$. 
If~$deg(v)=2n+1$, then  each color appears an odd number of times.
From each color, we choose an incident edge of $v$.
We separate them and we attach them to a new vertex $u$.
We notice that the degree of $u$ is $k$.
Thus  the number of colors appears in the rest of incident edges of $v$ is even.
Again we are able to pair these edges.
We do same for the paired edges as above.
Hence the vertex~$v$ is replaced by the union of some copies of~$L$ and the vertex $u$. 
Now, we do the same for every vertex of~$G$. 
Finally, we obtain a~$k$-edge-colorable~$k$-regular graph~$H$ which contains~$G$ as a contraction. 
Hence since~$\mathcal{F}'(H)=\mathcal{F}(H)=\mathcal{F}(G)=1$, we can conclude that~$\mathcal{F}'(G)=1$.\\
\noindent If~$\mathcal{F}'(G)=1$, then we note that semi-edge-colorability is preserved by contraction. 
So the first and third conditions imply that~$\mathcal{F}(G)=1$.
}\end{proof}
The proof of the preceding theorem implies the following corollary.
\begin{cor}\label{3-3}
Every $3$-edge-colorable finite graph is a contraction of a $k$-regular $3$-edge-colorable graph, where $k\geq3$ is an odd number.
\end{cor}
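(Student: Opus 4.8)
The statement is precisely what the proof of Theorem~\ref{semi-k} builds, so the plan is to spell out that construction and observe that the corollary is it read backwards. Fix the odd integer $k\geq 3$ and let $G$ be a finite $3$-edge-colorable graph, equipped with a semi-$3$-edge-coloring $c\colon E(G)\to\{1,2,3\}$; when $G$ is cubic this is nothing but a proper $3$-edge-coloring, and conversely a proper $3$-edge-coloring of a cubic graph is automatically a semi-$3$-edge-coloring (the union of any two colour classes is $2$-regular, hence meets every finite cut evenly). I would first record the gadget already used in the proof of Theorem~\ref{semi-k}: since $k+1$ is even, $K_{k+1}$ is $k$-regular and properly $k$-edge-colorable, and deleting one edge of colour $i$ from $K_{k+1}$ produces a graph $L$ in which every vertex has degree $k$ except the two former endpoints of that edge, which now have degree $k-1$ and are exactly the two vertices of $L$ missing colour $i$.

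Next I would carry out, vertex by vertex, the surgery of that proof. At a vertex $v$ of even degree every colour occurs at $v$ an even number of times, so the edges at $v$ split into monochromatic pairs; a pair of colour $i$ is rerouted to the two degree-$(k-1)$ vertices of a fresh copy of $L$ coming from a $K_{k+1}$ whose deleted edge had colour $i$, which restores $k$-regularity at the affected vertices and is consistent with $c$. At a vertex $v$ of odd degree every colour occurs an odd number of times; I first peel one edge of each colour off $v$ and attach these $k$ edges to a new vertex $u$ of degree $k$, after which $v$ has all colour multiplicities even and we are back in the previous case. Performing this at every vertex of $G$ yields a finite graph $H$ that is $k$-regular, whose inherited edge-colouring is still proper, so $H$ is $k$-edge-colorable; in particular, for $k=3$, $H$ is cubic and $3$-edge-colorable (and for general odd $k$ one simply runs the same construction with that $k$). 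Finally, contracting each inserted gadget back to a single point turns $H$ into $G$ again together with its colouring, so $G$ is a contraction of $H$ in the sense of this paper, where the contracted vertex sets need not be connected. All of this is already performed inside the proof of Theorem~\ref{semi-k}, so no new argument is needed.

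The only point that genuinely has to be checked --- and the only place the construction could break --- is the parity bookkeeping at each vertex: at every $v$ the three colour classes must have the same parity as $\deg(v)$, since this is what licenses the monochromatic pairing (or the peel-off-then-pair step), and it is exactly here that $k$ being odd is used. But it is immediate from the semi-$3$-edge-coloring hypothesis, whose defining condition already applies to the cut formed by the edges at $v$ (and $3$ is odd); for cubic $G$ it is the triviality that a proper $3$-edge-coloring uses each colour exactly once at every vertex. With this granted, the local surgeries glue to a global proper $k$-edge-coloring of the $k$-regular graph $H$, the inverse operation is visibly a contraction onto $G$, and the corollary follows.
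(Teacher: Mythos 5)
Your proof is correct and is essentially the paper's own: the paper justifies this corollary simply by pointing to the construction in the proof of Theorem~\ref{semi-k}, which is exactly the vertex-by-vertex surgery with the $K_{k+1}$-minus-an-edge gadget (plus the peel-off vertex $u$ at odd-degree vertices) that you spell out. Your reading of ``$3$-edge-colorable'' via semi-$3$-edge-colorings, together with the observation that the two notions coincide for cubic graphs, matches how the corollary is actually applied later in the paper.
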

\noindent In finite cubic graphs, the existence of non-elusive~$\mathbb Z_4$-flows and~$3$-edge-colorability are 
equivalent, see \cite[Proposition 6.4.5]{diestel}. Next, we generalize this fact to infinite graphs.
\begin{thm}
  Let~$G$ be a graph. Then~$G$ has a non-elusive~$\mathbb Z_4$-flow if and only if~$G$ is 
  semi-$3$-edge-colorable.
\end{thm}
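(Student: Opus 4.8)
The natural strategy is the same compactness dance used throughout the paper: reduce both directions to the finite case via Theorem \ref{compact}, then invoke the known finite equivalence \cite[Proposition 6.4.5]{diestel} together with the characterizations already established in Theorem \ref{Z_4} and the semi-edge-coloring lemma. The key point to keep in mind is that neither ``having a non-elusive $\mathbb Z_4$-flow'' nor ``being semi-$3$-edge-colorable'' is literally a property captured by a single $G_M$; rather, each is equivalent (by Theorem \ref{compact} and the $\mathcal A$-flow lemma, with $\mathcal A = \{e_1,e_2,\sum e_i\}\subseteq \mathbb Z_2\oplus\mathbb Z_2$) to ``$G_M$ has the corresponding $\mathcal A$-flow for every finite $M\subseteq \mathcal B_{\rm fin}(G)$.'' So both sides unwind to statements about all the finite contractions $G_M$, and on each such finite multigraph the two conditions agree.

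First I would make the forward direction precise. Suppose $G$ has a non-elusive $\mathbb Z_4$-flow. By Corollary \ref{HH'} we may assume it is a non-elusive $\mathbb Z_2\oplus\mathbb Z_2$-flow, equivalently an $\mathcal A$-flow where $\mathcal A=\{e_1,e_2,e_1+e_2\}$ (the three nonzero elements of $\mathbb Z_2\oplus\mathbb Z_2$); since $\mathcal A$ is compact in the discrete topology, Theorem \ref{compact} gives that $G_M$ has an $\mathcal A$-flow for every finite $M\subseteq\mathcal B_{\rm fin}(G)$. Now $G_M$ is a finite multigraph (ignoring loops, as in Remark \ref{withoutloops}); an $\mathcal A$-flow on it is exactly a non-elusive $\mathbb Z_2\oplus\mathbb Z_2$-flow, hence a non-elusive $\mathbb Z_4$-flow. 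Here I need the finite fact that an $\mathcal A$-flow valued in the three nonzero elements of $\mathbb Z_2\oplus\mathbb Z_2$ is the same data as a $3$-edge-coloring — this is essentially the content of \cite[Proposition 6.4.5]{diestel}, since the three color classes are precisely the preimages of $e_1,e_2,e_1+e_2$ and Kirchhoff's law on each vertex cut forces the three colors at each vertex to be distinct. Combined with the semi-edge-coloring lemma, a $3$-edge-coloring of the finite multigraph $G_M$ is a semi-$3$-edge-coloring, i.e.\ an $\mathcal A$-flow in the sense of the lemma above. Since this holds for every $G_M$, Theorem \ref{compact} (applied to the compact set $\mathcal A$ of the semi-edge-coloring lemma) yields that $G$ has such an $\mathcal A$-flow, i.e.\ $G$ is semi-$3$-edge-colorable.

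The backward direction is symmetric: if $G$ is semi-$3$-edge-colorable then, by the $\mathcal A$-flow lemma and Theorem \ref{compact}, every $G_M$ is semi-$3$-edge-colorable, hence $3$-edge-colorable as a finite multigraph; by \cite[Proposition 6.4.5]{diestel} each $G_M$ has a non-elusive $\mathbb Z_4$-flow; so by Theorem \ref{compact} again $G$ has a non-elusive $\mathbb Z_4$-flow. I would then remark that $G$ need not be cubic here, unlike in the finite statement — the contraction machinery of Corollary \ref{3-3} (or rather the fact that the conditions on $G_M$ only ever see finite multigraphs, to which the cubic hypothesis of \cite[Proposition 6.4.5]{diestel} must somehow be applied) is the delicate spot.

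The main obstacle is exactly that last point. Proposition 6.4.5 of \cite{diestel} is stated for cubic graphs, but a generic finite contraction $G_M$ of an arbitrary $G$ is not cubic; its dummy vertices can have large degree. So I cannot apply the finite cubic theorem directly to $G_M$. The fix is to route through the equivalence between non-elusive $\mathbb Z_4$-flows and semi-$3$-edge-colorability \emph{for general finite multigraphs} — and that general finite equivalence is precisely what must be proven (or cited) to make the argument go through. Concretely: on a finite multigraph, a non-elusive $\mathbb Z_4$-flow equals a non-elusive $\mathbb Z_2\oplus\mathbb Z_2$-flow (Corollary \ref{HH'} holds for finite graphs), whose three ``coordinate-support'' classes $E_1, E_2, E(G)\setminus(E_1\triangle E_2)$ each meet every cut evenly by Kirchhoff; conversely a semi-$3$-edge-coloring gives three classes each meeting every cut evenly, which assemble into a $\mathbb Z_2\oplus\mathbb Z_2$-flow. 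That even-intersection-with-cuts bookkeeping — the finite, loop-ignoring analogue of Theorem \ref{Z_4} combined with the semi-edge-coloring lemma — is the real heart, and once it is in hand the two applications of Theorem \ref{compact} close the argument. I would therefore organize the proof as: (1) finite lemma equating non-elusive $\mathbb Z_4$-flows with semi-$3$-edge-colorings on finite multigraphs, via the $\mathbb Z_2\oplus\mathbb Z_2$ decomposition; (2) two symmetric compactness arguments lifting this to $G$.
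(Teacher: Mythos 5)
Your final organization --- (1) a finite lemma equating non-elusive $\mathbb Z_4$-flows with semi-$3$-edge-colorings on arbitrary finite multigraphs via the $\mathbb Z_2\oplus\mathbb Z_2$ coordinate decomposition, then (2) two applications of Theorem \ref{compact} --- is correct, but it is a genuinely different route from the paper's in the harder direction. The paper's forward implication is essentially your parity bookkeeping (Corollary \ref{HH'} plus the observation that the first and second coordinates of a non-elusive $\mathbb Z_2\oplus\mathbb Z_2$-flow each meet every finite cut evenly), carried out directly on $G$ without passing through the $G_M$. For the converse, however, the paper does \emph{not} prove your general finite equivalence: it keeps the cubic hypothesis of \cite[Proposition~6.4.5]{diestel} and instead blows each $G_M$ up, via Corollary \ref{3-3} (really the construction in the proof of Theorem \ref{semi-k}), to a cubic $3$-edge-colorable graph $\widetilde{G_M}$ of which $G_M$ is a contraction; the cubic result then gives a non-elusive $\mathbb Z_4$-flow on $\widetilde{G_M}$, which descends to $G_M$ and lifts to $G$ by Theorem \ref{compact}. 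Your route buys a shorter and more self-contained argument (the finite equivalence is a two-line parity computation, identical in spirit to Theorem \ref{Z_4}, and needs no regularization step); the paper's route buys a reuse of the Boolean-functor/contraction machinery it has just built and keeps the citation to the textbook cubic statement literal. One caution: your first-pass forward argument asserts that Kirchhoff's law at a vertex of $G_M$ forces the three colors to be distinct and that a $3$-edge-coloring of $G_M$ is automatically a semi-$3$-edge-coloring; both claims are false for non-cubic finite multigraphs (a triangle carries the constant $(1,0)$-valued non-elusive $\mathbb Z_2\oplus\mathbb Z_2$-flow, which is monochromatic). You correctly identify and repair this in your final paragraph, and only the repaired version should appear in a written-up proof.
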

\begin{proof}{
First, assume that~$G$ is semi-$3$-edge-colorable. Since every contraction of~$G$ is semi-$3$-edge-colorable,
we conclude that every~$G_M$ is semi-$3$-edge-colorable,  for every finite subset of~$M$ of~$\mathcal{B}_{\rm{fin}}(G)$. 
It follows from Corollary \ref{3-3} that there is a cubic graph~$\widetilde{G_M}$ in such a way that~$\widetilde{G_M}$ is~$3$-edge-colorable and moreover~$G_M$ is a contraction of~$\widetilde{G_M}$. We invoke Part (ii) of \cite[Proposition 6.4.5]{diestel} and we conclude that~$\widetilde{G_M}$ has a non-elusive~$\mathbb Z_4$-flow, as~$\widetilde{G_M}$ is a cubic graph and it is~$3$-edge-colorable. 
We notice  that by the definition of~$\widetilde{G_M}$, we deduce that~$G_M$ has a non-elusive~$\mathbb Z_4$-flow. 
Now, by Theorem~\ref{compact}, we deduce that~$G$ has a non-elusive~$\mathbb Z_4$-flow. 
For the forward implication, 
by Corollary \ref{HH'},~$G$ has a non-elusive~$\mathbb Z_2\oplus\mathbb Z_2$-flow, say~$f$. 
We define a semi-$3$-edge-coloring~$c\colon E(G)\rightarrow \mathbb Z_2\oplus\mathbb Z_2\setminus\{(0,0)\}$ 
by~$c(e)=f(e)$. 
Let~$F$ be a finite cut of~$G$. 
Then since $f$ is a non-elusive $\mathbb Z_2\oplus \mathbb Z_2$, the map $f$ sums up to zero on the edges of $F$.
In particular, the sum of all assignments of edges with the value $(1,0)$ is zero.
Thus we are able to deduce that the parity of every color of each edge of $F$ is the same. 
Thus~$G$ is semi-$3$-edge-colorable, as desired.~}\end{proof}

 

\subsection{Hamiltonicity}

\noindent A  graph is \textit{Eulerian} if it is connected and all vertices have even degree. We call a finite graph \textit{supereulerian} if it has a spanning Eulerian subgraph.

\begin{lem}\label{supereulerian}
 Every finite supereulerian graph has a non-elusive~$\mathbb Z_4$-flow.
\end{lem}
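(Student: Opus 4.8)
The plan is to reduce the claim to the finite cubic case via a known characterization and the $\mathbb Z_2\oplus\mathbb Z_2$ reformulation already used for $\mathbb Z_4$-flows in this paper. A finite supereulerian graph $G$ contains a spanning Eulerian subgraph $G_1$, i.e.\ a connected spanning subgraph all of whose degrees are even. The natural approach is to exhibit $E(G)$ as a union of two elements of the (finite) cycle space: take $E(G_1)$, which lies in $\mathcal{C}_{\rm{fin}}(G)$ since $G_1$ is Eulerian (every vertex has even degree, hence $E(G_1)$ meets every cut evenly), and take $E(G)\setminus E(G_1)$. The latter need not be a cycle-space element on its own, so this is where a little care is needed.

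First I would handle the complement. Write $E_2 = E(G)\setminus E(G_1)$. The graph $(V,E_2)$ may have vertices of odd degree, but since $G_1$ is spanning and $G_1$ is Eulerian, the parity of $\deg_{G_1}(v)$ is even for every $v$, so $\deg_{E_2}(v) \equiv \deg_G(v) \pmod 2$. This is not automatically even, so $E_2$ itself is not in $\mathcal{C}_{\rm{fin}}(G)$ in general. To fix this I would not insist on this particular splitting; instead I would invoke the standard fact (analogue of Theorem \ref{Z_2} / the finite $\mathbb Z_2\oplus\mathbb Z_2$ picture) that $G$ has a non-elusive $\mathbb Z_4$-flow iff $E(G)$ is the union of two even subgraphs (edge sets each meeting every cut evenly), and then produce such a splitting from the spanning Eulerian subgraph by a parity argument. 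Concretely: let $T$ be the set of vertices of odd degree in $G$; pair them up along edge-disjoint paths inside the connected graph $G_1$, and symmetric-difference $E_1:=E(G_1)$ with these paths to absorb the odd-degree discrepancy, simultaneously adjusting $E_2$; after this both parts are even. The key step here is a careful bookkeeping of parities, which is routine.

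An alternative, cleaner route that I would actually prefer: reduce directly to the known finite result that a finite cubic graph which is supereulerian (equivalently has a perfect matching, for cubic graphs) has a $\mathbb Z_4$-flow, or more simply cite the classical fact that every supereulerian graph has nowhere-zero $4$-flow — this follows since contracting a spanning closed trail leaves a single vertex, and more usefully since $G$ then has two edge-disjoint spanning connected even subgraphs after adding the complement structure. Given that the paper works with finite multigraphs freely, the slickest argument is: a spanning Eulerian subgraph $G_1$ has a nowhere-zero $\mathbb Z_2$-flow $f_1=\delta_{E(G_1)}$ by Theorem \ref{Z_2} (viewing $G_1$'s edge set as a cycle-space element of $G$), and $E(G)\setminus E(G_1)$ can be completed to a cycle-space element $E_2$ of $G$ by adding a suitable subset of $E(G_1)$ (possible because $G_1$ is connected and spanning, so its cycle space, together with $T$-joins, reaches every parity vector). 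Then $\delta_{E_2}$ is a second non-elusive $\mathbb Z_2$-flow, and $(\delta_{E_1'},\delta_{E_2})$ — with $E_1'$ the complementary adjustment — gives a non-elusive $\mathbb Z_2\oplus\mathbb Z_2$-flow, hence a non-elusive $\mathbb Z_4$-flow by Corollary \ref{HH'}.

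The main obstacle, and the only genuinely non-formal point, is the parity/completion step: showing that the edge set of $G$ can be partitioned (or covered) by two even subgraphs given the spanning Eulerian subgraph. Once that combinatorial lemma is in place, everything else is an application of Theorem \ref{Z_2} and Corollary \ref{HH'}, which handle the translation to $\mathbb Z_4$. I expect the write-up to state the spanning Eulerian subgraph, note its edge set is in the cycle space, deal with the complement's parities using connectivity of the Eulerian subgraph, and then conclude via the $\mathbb Z_2\oplus\mathbb Z_2$ decomposition.
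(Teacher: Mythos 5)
Your approach is essentially the paper's: the paper takes the constant $(0,1)$-flow on the spanning Eulerian subgraph $C$ and adds, for each edge $e_i\notin C$, a $(1,0)$-flow supported on $e_i$ together with a walk $P_i\subseteq C$ joining its endpoints, and summing these over $\mathbb Z_2$ yields exactly your cover $E(G)=E(C)\cup\bigl(E_2\cup J\bigr)$ with $J=\triangle_i P_i$ a $T$-join in $C$, followed by the passage to $\mathbb Z_4$ via Corollary \ref{HH'}. One caution on your write-up: you must enlarge the complement $E_2$ by the $T$-join while leaving $E(C)$ untouched --- a \emph{union} of two even edge sets suffices and need not be a partition --- whereas symmetric-differencing $E(C)$ with the paths, as your first variant suggests, would make that part odd at the vertices of $T$ and break the argument.
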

\begin{proof}{
Let~$G$ be a supereulerian graph. 
Then by Corollary \ref{HH'}, it is enough to show that~$G$ has a non-elusive~$\mathbb Z_2\oplus\mathbb Z_2$-flow. 
Let~$C$ be a spanning Eulerian subgraph of~$G$.  The degree of every vertex of~$G$ in~$C$ is even. 
Thus the constant function with the value $(0,1)$ is a non-elusive $\mathbb Z_2$-flow in $C$. 
We denote this~$\mathbb Z_2\oplus\mathbb Z_2$-flow by~$F$. 
Let~$e_1,\ldots,e_k$ be an enumeration of the edges outside~$C$. 
Suppose that~$u_i$ and~$v_i$ are the end vertices of~$e_i$. 
Since~$C$ is a spanning Eulerian subgraph of~$G$, we can find a walk~$P_i$ in $C$ between~$u_i$ and~$v_i$.
We define a new flow~$F_i$ by assigning~$(1,0)$ to every edge of~$P_i\cup\{e_i\}$. 
Note that~$P_i\cup\{e_i\}$ is an Eulerian subgraph. 
So~$F_i$ is a~$\mathbb Z_2\oplus\mathbb Z_2$-flow of~$G$, for~$i=\{1,\ldots,k\}$.
Then~$\sum_{i=1}^k F_i+F$ is a~$\mathbb Z_2\oplus\mathbb Z_2$-flow, too. 
Now, we claim that~$\sum_{i=1}^k F_i+F$ is a non-elusive~$\mathbb Z_2\oplus\mathbb Z_2$-flow. 
It is enough to show that~$\sum_{i=1}^k F_i+F$ is non-zero for an arbitrary edge of~$C$, as the value of~$e_i$ is~$(1,0)$, for~$i=1,\ldots,k$. 
Since the second component of the map~$\sum_{i=1}^k F_i+F$ is always 1 for every edge of~$C$, the flow~$\sum_{i=1}^k F_i+F$ is non-elusive and the claim is proved, as desired.
}\end{proof}
\begin{remark}\label{18}
Catlin \cite{Catlin} showed that every finite~$4$-edge-connected graph is supereulerian. 
Thus it follows from Lemma \ref{supereulerian} that every finite~$4$-edge-connected graph has a~$4$-flow. This result has been proved by Jaeger \cite{Jaeger1}.
\end{remark}
\noindent A \textit{Hamiltonian circle} is a circle containing every vertex of an infinite graph. 
It is worth mentioning that every
Hamiltonian circle contains all vertices and all ends precisely once.
\begin{cor}
Every graph containing a Hamiltonian circle has a non-elusive~$\mathbb Z_4$-flow.
\end{cor}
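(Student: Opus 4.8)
The plan is to reduce the claim to the finite case via the compactness machinery of Theorem~\ref{compact}, using the Hamiltonian circle to equip every finite contraction $G_M$ with a suitable structure. Let $G$ be a graph containing a Hamiltonian circle $\mathcal{H}$, and let $M$ be an arbitrary finite subset of $\mathcal{B}_{\rm fin}(G)$. By Theorem~\ref{compact} and Corollary~\ref{HH'}, it suffices to show that each $G_M$ has a non-elusive $\mathbb Z_4$-flow, and by Lemma~\ref{supereulerian} it is enough to show that each $G_M$ (after discarding loops, as in Remark~\ref{withoutloops}) is supereulerian, i.e.\ has a spanning connected even subgraph.

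The key geometric input is that the image of $\mathcal{H}$ under the contraction map gives a spanning closed walk of $G_M$. Concretely, $\mathcal{H}$ is a circle in $|G|$ passing through every vertex and every end exactly once; the edge set $E(\mathcal{H})$ meets every finite cut of $G$ in an even number of edges (this is the defining property of the topological cycle space, Theorem~\ref{cycle space}), hence in particular meets every cut of $G_M$ evenly, so $E(\mathcal{H})$ induces an even subgraph $D$ of $G_M$. First I would argue that $D$ is connected and spanning in $G_M$: since $\mathcal{H}$ visits every vertex of $G$, and the vertex classes of $G_M$ partition $V(G)$ (Remark~\ref{phi}), $D$ touches every vertex of $G_M$; and since $\mathcal{H}$ is connected as a subspace of $|G|$, tracing it and recording the vertex classes of $G_M$ it passes through shows the trace is a connected closed walk covering all of $V(G_M)$, so the edges of $D$ actually traversed form a connected spanning even subgraph. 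Thus $G_M$ is supereulerian.

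Then Lemma~\ref{supereulerian} gives a non-elusive $\mathbb Z_4$-flow on each $G_M$; since $M$ was arbitrary, Theorem~\ref{compact} yields a non-elusive $\mathbb Z_4$-flow on $G$ (here $\mathbb Z_4$, or equivalently $\mathbb Z_2\oplus\mathbb Z_2$, carries the discrete topology, so the relevant value set is compact and Theorem~\ref{compact} applies).

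The main obstacle is the middle step: carefully justifying that the Hamiltonian circle, which lives in the compactification $|G|$ and may wind through ends, actually projects to a \emph{connected spanning} even subgraph of the finite multigraph $G_M$. The subtlety is that an end of $G$ could in principle be "split" by the cuts in $M$ — but since each cut in $M$ is finite and $\mathcal{H}$ passes through each end exactly once, only finitely many edges of $\mathcal{H}$ lie in cuts of $M$, so the trace of $\mathcal{H}$ in $G_M$ is a genuine finite closed walk; I would spell this out using that $\overline{E(\mathcal{H})}$ is arc-connected and that removing the finitely many $M$-cut edges from $\mathcal{H}$ leaves arcs each contained in a single vertex class of $G_M$. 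Once this is established the rest is a direct chaining of results already proved in the paper. \eolqed
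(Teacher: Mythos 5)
Your proposal follows essentially the same route as the paper: project the Hamiltonian circle into each finite contraction $G_M$, observe that the image is a spanning connected even subgraph so that $G_M$ is supereulerian, apply Lemma~\ref{supereulerian}, and lift back via Theorem~\ref{compact}. The only difference is that you spell out the evenness (via Theorem~\ref{cycle space}) and connectedness of the projected circle, steps the paper's proof simply asserts when it says $\phi(C)$ is a spanning Eulerian subgraph of $G_M$.
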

\begin{proof}{
Let~$C$ be a Hamiltonian circle of~$|G|$ and~$M$ be a finite subset of~$\mathcal{B}_{\rm{fin}}(G)$.
Also, let~$\phi:G\to G_M$ be the map which is defined in Remark \ref{phi}. 
Then~$\phi(C)$ is a spanning Eulerian subgraph of~$G_M$ and so~$G_M$ is supereulerian.
It follows from Lemma \ref{supereulerian} that~$G_M$ has a non-elusive~$\mathbb Z_4$-flow 
for every finite subset~$M$ of~$\mathcal{B}_{\rm{fin}}(G)$. 
Now, we invoke Theorem \ref{compact} and we conclude that~$G$ has a non-elusive~$\mathbb Z_4$-flow.}\end{proof}

\subsection{Conjectures}

\noindent In the study of flow theory one main point of interest is the connection to the edge-connectivity. For example, if a finite graph is~$2$-edge-connected, then it has a non-elusive~$\mathbb Z_6$-flow, see \cite{sey}.\\
Next up, we show that the connection between edge-connectivity  and the existence of  a non-elusive flow for infinite graphs admits exactly the same connection as for finite graphs.
\begin{cor}\label{edgeconn}
If~$n$-edge-connectivity implies the existence of an~$m$-flow for finite graphs, then this  implication holds for infinite graphs as well.
\end{cor}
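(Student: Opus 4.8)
The plan is to derive this from the compactness principle, Theorem~\ref{compact}, in exactly the style of the earlier corollaries. Fix the integers $n$ and $m$, assume every finite $n$-edge-connected graph has an $m$-flow, and let $G$ be an infinite $n$-edge-connected graph. By Definition~2, an $m$-flow of $G$ is the same thing as an $A$-flow for $H=\mathbb Z$ with the discrete topology and the finite---hence compact---set $A=\{-(m-1),\dots,m-1\}\setminus\{0\}$. So Theorem~\ref{compact} reduces the task to showing that $G_M$ has an $m$-flow for every finite subset $M$ of $\mathcal{B}_{\rm{fin}}(G)$.

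For a fixed such $M$, I would first delete the loops of $G_M$, as licensed by Remark~\ref{withoutloops}, obtaining a finite multigraph $G_M'$; the crux is to check that $G_M'$ is again $n$-edge-connected. Here I would invoke Remark~\ref{phi}: the vertex set of $G_M$ partitions $V(G)$ and $\phi$ is a bijection on edge sets sending each edge $uv$ of $G$ to an edge of $G_M$ between $V_u$ and $V_v$. Consequently, any bipartition $(\mathcal A,\mathcal B)$ of $V(G_M)$ pulls back under $\phi^{-1}$ to a bipartition of $V(G)$ into two nonempty classes, and the corresponding edge cut of $G_M$ (loops lying in no cut, this is the same cut in $G_M'$) is carried bijectively to the edge cut of $G$ between the two preimage classes; since $G$ is $n$-edge-connected, that cut has at least $n$ edges, so $G_M'$ is $n$-edge-connected. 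One also checks that $G_M$ has at least two vertices whenever $M$ contains a nonempty cut---a nonempty cut $E(A,B)\in M$ already separates the distinct vertices $V_u$, $u\in A$, and $V_v$, $v\in B$---while if $M$ contains no nonempty cut then $G_M'$ is a single isolated vertex, which has an $m$-flow vacuously. In every case $G_M'$ has an $m$-flow by the hypothesis; extending it arbitrarily over the loops of $G_M$ (harmless, by Remark~\ref{withoutloops}) gives an $m$-flow of $G_M$, and Theorem~\ref{compact} then delivers an $m$-flow of $G$.

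The one genuine obstacle is the edge-connectivity bookkeeping: one must confirm that the operation ``contract with respect to $M$'' of Definition~3---where contracted vertex sets need not be connected and both loops and multiple edges are produced---cannot lower the edge-connectivity, and that discarding loops is harmless; both facts fall out of the edge-bijection in Remark~\ref{phi} together with the fact that no loop belongs to any cut. A secondary point worth flagging is that the finite hypothesis must be read for finite multigraphs---the standard setting of flow theory, and precisely the setting of the results cited in this subsection, such as Seymour's and Jaeger's theorems---since the graphs $G_M$ are genuinely multigraphs.
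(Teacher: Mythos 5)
Your proposal is correct and follows the paper's own route exactly: reduce via Theorem~\ref{compact} to the finite contractions $G_M$, observe that $n$-edge-connectivity is inherited by $G_M$, apply the finite hypothesis, and conclude. The paper simply asserts that $G_M$ is $n$-edge-connected without proof, whereas you supply the justification via the edge bijection of Remark~\ref{phi} (equivalently, Lemma~\ref{cut}'s inclusion $\mathcal{B}(G_M)\subseteq\mathcal{B}_{\rm{fin}}(G)$) and handle loops and degenerate cases explicitly; these are worthwhile details but not a different argument.
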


\begin{proof}{ Let~$M$ be a finite subset of~$\mathcal{B}_{\rm{fin}}(G)$.
Note that since~$G$ is~$n$-edge-connected, the multigraph~$G_M$ is~$n$-edge-connected. 
By assumption, the graph $G_M$ has an~$m$-flow.
Now, we invoke Theorem \ref{compact} and  conclude that~$G$ has an~$m$-flow.
}\end{proof}
As a corollary of Remark \ref{18} and Corollary \ref{edgeconn}, we obtain the following.
\begin{cor}
Every~$4$-edge-connected graph has a~$4$-flow.\hfill~$\square$ 
\end{cor}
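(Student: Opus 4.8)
The plan is to combine the two results that immediately precede the statement, with essentially nothing new to prove. First I would record that the finite case is already in hand: by Catlin's theorem every finite $4$-edge-connected graph is supereulerian, and Lemma~\ref{supereulerian} converts a spanning Eulerian subgraph into a non-elusive~$\mathbb Z_4$-flow, which by Corollary~\ref{kZk} is the same as a~$4$-flow. This is exactly the content of Remark~\ref{18}, and it says, in the terminology of Corollary~\ref{edgeconn}, that for \emph{finite} graphs the property ``$4$-edge-connected'' implies the existence of a~$4$-flow.

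Second, I would apply Corollary~\ref{edgeconn} with $n=m=4$. That corollary transfers any such implication from finite to infinite graphs: given a~$4$-edge-connected graph~$G$ and any finite $M\subseteq\mathcal B_{\mathrm{fin}}(G)$, the contraction~$G_M$ is again~$4$-edge-connected (every cut of~$G_M$ is a finite cut of~$G$ by Lemma~\ref{cut}, so minimum cut size does not drop), hence has a~$4$-flow by the finite statement; Theorem~\ref{compact} then assembles these finite~$4$-flows into a~$4$-flow of~$G$ itself.

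I do not expect any real obstacle here. The only thing to check is that the hypothesis of Corollary~\ref{edgeconn} is actually satisfied for $n=m=4$, and that is precisely Remark~\ref{18} (ultimately Jaeger's theorem). So the proof collapses to a single deduction: since $4$-edge-connectivity implies the existence of a~$4$-flow for finite graphs by Remark~\ref{18}, Corollary~\ref{edgeconn} yields the same implication for arbitrary graphs, and therefore every~$4$-edge-connected graph has a~$4$-flow.
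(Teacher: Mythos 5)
Your proposal is correct and follows exactly the paper's route: the result is stated there as an immediate consequence of Remark~\ref{18} (the finite case, via Catlin and Lemma~\ref{supereulerian}) combined with Corollary~\ref{edgeconn} applied with $n=m=4$. Your additional aside that $G_M$ stays $4$-edge-connected because its cuts are finite cuts of $G$ is just the detail already contained in the proof of Corollary~\ref{edgeconn}.
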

\noindent  There are some famous conjectures in finite flow theory such as the four-flow conjecture and the three-flow conjecture. If these conjectures hold true for finite graphs, then they are true for infinite graphs and vice versa.

\noindent {\bf Five-flow conjecture:}  Every~$2$-edge-connected graph  has a~$5$-flow.

\noindent {\bf Four-flow conjecture:} Let~$G$ be a bridgeless graph. 
If for every finite subset $M$ of~$\mathcal{B}_{\rm{fin}}(G)$, $G_M$ does not contain the  Petersen graph as a topological minor, then~$G$ has a non-elusive~$4$-flow.

\noindent {\bf Three-flow conjecture:} Every~$4$-edge-connected graph has a~$3$-flow.

\noindent In 1961, Seymour \cite{sey}  has shown that every finite bridgeless graph has 
a~$6$-flow. 
Immediately, Theorem \ref{compact} implies the following theorem.
\begin{thm}
Every  bridgeless graph~$G$ has a~$6$-flow.\hfill~$\square$ 
\end{thm}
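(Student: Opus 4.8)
The plan is to invoke the compactness machinery developed in Theorem \ref{compact} together with Seymour's classical six-flow theorem for finite graphs \cite{sey}. Recall that $G$ having a $6$-flow means, by Definition 2, that $G$ has an $A$-flow for $H=\mathbb Z$ with the discrete topology and $A=\{-5,\ldots,5\}\setminus\{0\}$; this set $A$ is finite, hence compact, so Theorem \ref{compact} applies. Thus it suffices to show that for every finite subset $M$ of $\mathcal{B}_{\rm{fin}}(G)$, the finite multigraph $G_M$ has a $6$-flow.

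First I would fix an arbitrary finite $M\subseteq\mathcal{B}_{\rm{fin}}(G)$ and form the multigraph $G_M$. As recorded in Remark \ref{withoutloops}, we may discard its loops, since loops lie in no cut and can be assigned flow values arbitrarily afterward without affecting the Kirchhoff conditions. The remaining multigraph is finite. Next I would check that $G_M$ is bridgeless: a bridge of $G_M$ would be a cut of $G_M$ consisting of a single edge, and by Lemma \ref{cut} every cut of $G_M$ lies in $\mathcal{B}_{\rm{fin}}(G)$, so this single edge would be a bridge of $G$ — contradicting the hypothesis that $G$ is bridgeless. (Here one uses that $\phi$ from Remark \ref{phi} is a bijection on edges, so the edge of $G_M$ corresponds to a genuine bridge-edge of $G$.) Therefore $G_M$ is a finite bridgeless multigraph.

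Now I would apply Seymour's theorem \cite{sey}, which states that every finite bridgeless graph (and the argument carries over to multigraphs without loops in the standard way) has a nowhere-zero $6$-flow, equivalently a $6$-flow in the sense of Definition 2. Hence every $G_M$ has a $6$-flow. Finally, by Theorem \ref{compact} with $A=\{-5,\ldots,5\}\setminus\{0\}$, the graph $G$ itself has an $A$-flow, i.e.\ a $6$-flow, which is exactly the claim.

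The only genuine subtlety — and the step I would be most careful about — is the reduction to the finite bridgeless case: one must confirm that the loop-deletion in $G_M$ really is harmless (handled by Remark \ref{withoutloops}) and, above all, that bridgelessness transfers from $G$ to $G_M$. Everything else is a direct instantiation of Theorem \ref{compact}, exactly as in the proofs of Corollaries \ref{HH'}, \ref{kZk}, and \ref{edgeconn}. In fact this theorem is an immediate special case of Corollary \ref{edgeconn} with $n=2$, $m=6$, using Seymour's result as the finite input, which is why the excerpt phrases it as an immediate consequence of Theorem \ref{compact}.
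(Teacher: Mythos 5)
Your proof is correct and follows exactly the route the paper intends: the paper derives this theorem "immediately" from Theorem \ref{compact} together with Seymour's finite six-flow theorem, which is precisely the $n=2$, $m=6$ instance of Corollary \ref{edgeconn} that you identify. Your added care in checking that bridgelessness passes from $G$ to $G_M$ via Lemma \ref{cut} and that loops are harmless by Remark \ref{withoutloops} only makes explicit what the paper leaves implicit.
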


\section{Tension of Infinite Graphs}

Another concept related to flows is tension.  Let~$G=(V,E)$ be a finite graph and~$K$ be a group that is not necessarily  abelian. 
We call a map~$f:\overrightarrow{E}\to K$ a \textit{$K$-tension} if~$f$ satisfies~$\sum_{e\in \overrightarrow{C}} f(e)=0$ for every directed cycle~$\overrightarrow{C}$ of~$G$. 
We note that we sum up the assignments of edges with respect to a cyclic order.
If~$f(\overrightarrow{e})\neq 0$ for every~$\overrightarrow{e}\in \overrightarrow{E}$ then~$G$ has a \textit{nowhere-zero~$K$-tension}. Since we are studying cycles, it does not matter where we start, and moreover, if~$G$ has a~$K$-tension, the choice of our edge orientation is irrelevant, as every element of~$K$ has its inverse. So we can define our~$K$-tension for infinite graphs~$G$ in an analogous manner with superseding finite cuts with finite cycles in the definition of a flow. 
Suppose that~$K$ is a Hausdorff topological group with a compact subset~$A$ of~$K$. 
We define~$\sigma \colon K^{\overrightarrow{E}}\to K^{\overrightarrow{\mathcal{C}_{\rm{fin}}}(G)}$ such that~$\sigma(f)(\overrightarrow{C})=\sum_{\overrightarrow{e}\in \overrightarrow{C}} f(\overrightarrow{e})$ for any finite oriented cycle~$C$. Let~$M$ be a subset of~${\mathcal{C}_{\rm{fin}}}(G)$.
Then we say that~$G$ has an \textit{$A$-tension} with respect to~$M$  if~$F_M=\{f\in A^{\overrightarrow{E}}\,|\,\sigma(f)(C)=0$ for every~$C\in M\}$ is not empty and we say that~$G$ has an \textit{$A$-tension} if~$G$ has an~$A$-tension with respect to~$\mathcal{C}_{\rm{fin}}(G)$. 
If~$f$ is an~$A$-tension and $K\setminus \{0\}$ then we say that~$f$ is a \textit{non-elusive~$K$-tension}. 

If $f$ is an $A$-flow and $A\subseteq K\setminus \{0\}$, then we also call $f$ a \emph{non-elusive $H$-flow}. 
Now, a natural question arises: When does an infinite graph~$G$ have a non-elusive~$K$-tension? 
At first glance, it seems that we can use the concept of dual graphs. 
A pair of \textit{dual graphs} is a pair of graphs~$(G, G^*)$ such that there is a bijection~$\phi: E(G)\to E(G^*)$ with the property that a finite set~$A\subseteq E(G)$ is the edge set of a cycle if and only if~$\phi(A)$ is a bond (minimal edge cut)  in~$G^*$. Thomassen \cite[Theorem 3.2]{thom1} showed that  a~$2$-connected graph~$G$ has a dual graph if and only if~$G$ is planar and any two vertices of~$G$ are separated by a finite edge cut. 
Moreover if~$G^*$ is a dual graph of~$G$ and~$A\subseteq E(G)$, then~$G^*/A^*$ is a dual graph of~$G-A$, see \cite[Lemma 9.11]{thom3}. 
For more details regarding the concept of duality with the topological approach, see \cite{Bruhn}. We denoted by~$G^*/A^*$ the graph obtained from~$G^*$ by contracting all edges of~$A^*$.  
Hence, for defining the similar graph like~$G_M$ in Definition 3, we have to delete some edges from~$G$ and it holds true only for planar graphs where every two of its vertices are separated by a finite edge cut. 
In the next theorem, we delete edges for an arbitrary graph and show that the above argument is still true.
\begin{thm}
Let~$G$ be a graph and~$\mathcal{C}$ be a finite subset of~$\overrightarrow{\mathcal{C}}_{\rm{fin}}(G)$.  
Then~$G$ has a non-elusive~$K$-tension if and only if every  finite subset~$\mathcal{C}$ of~$\overrightarrow{\mathcal{C}}_{\rm{fin}}(G)$ has a non-elusive~$K$-tension.
\end{thm}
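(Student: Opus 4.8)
The plan is to imitate, almost verbatim, the proof of Theorem~\ref{compact}, with finite cuts replaced by finite cycles throughout; the argument in fact becomes slightly shorter, since the hypothesis is already phrased directly in terms of the sets $F_{\mathcal C}$, so no analogue of Lemma~\ref{cut} is needed here. Fix the compact set $A\subseteq K\setminus\{0\}$ that is to witness non-elusiveness (so, as in Theorem~\ref{compact}, the statement is really about $A$-tensions for a fixed compact $A$). Recall that for $M\subseteq\overrightarrow{\mathcal{C}}_{\rm{fin}}(G)$ one writes $F_M=\{f\in A^{\overrightarrow E}\mid \sigma(f)(\overrightarrow C)=0 \text{ for every } \overrightarrow C\in M\}$, so that ``$G$ has a non-elusive $K$-tension with respect to $\mathcal C$'' means $F_{\mathcal C}\neq\varnothing$ and ``$G$ has a non-elusive $K$-tension'' means $F_{\overrightarrow{\mathcal{C}}_{\rm{fin}}(G)}\neq\varnothing$.

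The forward implication is immediate: any $f\in F_{\overrightarrow{\mathcal{C}}_{\rm{fin}}(G)}$ clearly lies in $F_{\mathcal C}$ for every finite $\mathcal C\subseteq\overrightarrow{\mathcal{C}}_{\rm{fin}}(G)$, so all these $F_{\mathcal C}$ are nonempty. For the converse, assume $F_{\mathcal C}\neq\varnothing$ for every finite $\mathcal C$. Give $K^{\overrightarrow E}$ the product topology; by Tychonoff's theorem (\cite[Theorem 37.3]{munkres}) the subspace $A^{\overrightarrow E}$ is compact. For a fixed oriented finite cycle $\overrightarrow C$, the map $f\mapsto\sigma(f)(\overrightarrow C)=\sum_{\overrightarrow e\in\overrightarrow C} f(\overrightarrow e)$ is a finite combination of coordinate projections under the continuous group operation of $K$, hence continuous; since $K$ is Hausdorff, $\{0\}$ is closed, so its preimage is closed, and therefore $F_{\{\overrightarrow C\}}$ is a closed subset of $A^{\overrightarrow E}$.

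Finally, the family $\{F_{\{\overrightarrow C\}}\mid \overrightarrow C\in\overrightarrow{\mathcal{C}}_{\rm{fin}}(G)\}$ has the finite intersection property, because $\bigcap_{i=1}^t F_{\{\overrightarrow C_i\}}=F_{\{\overrightarrow C_1,\dots,\overrightarrow C_t\}}\neq\varnothing$ for any $\overrightarrow C_1,\dots,\overrightarrow C_t$ by hypothesis. Since $A^{\overrightarrow E}$ is compact, a family of closed subsets with the finite intersection property has nonempty total intersection (\cite[Theorem 26.9]{munkres}), so $F_{\overrightarrow{\mathcal{C}}_{\rm{fin}}(G)}=\bigcap_{\overrightarrow C}F_{\{\overrightarrow C\}}\neq\varnothing$, i.e.\ $G$ has a non-elusive $K$-tension.

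I do not expect a genuine obstacle here: the only subtlety, exactly as in Theorem~\ref{compact}, is that compactness must be drawn from $A$ via $A^{\overrightarrow E}$ rather than from the ambient group $K$ (which need not be compact), which is precisely why the definition insists that $A$ be compact. The ``delete edges / dual graph'' discussion preceding the statement is only motivation, explaining why the $G_M$-construction of Definition~3 cannot be mimicked for tensions on arbitrary graphs; the theorem deliberately sidesteps that difficulty by working with the sets $F_{\mathcal C}$ directly.
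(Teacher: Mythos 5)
Your proposal is correct and follows exactly the route the paper intends: the paper's own proof simply defines the sets $F_{\mathcal C}$ and says to repeat the compactness argument of Theorem~\ref{compact} verbatim with finite cycles in place of finite cuts, which is precisely what you have written out (including the correct observations that no analogue of Lemma~\ref{cut} is needed and that compactness comes from $A^{\overrightarrow E}$ rather than from $K$).
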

\begin{proof}
{ Set~$$F_{\mathcal{C}}=\{f\,{\rm{is \,a}}\,K\textendash{\rm{tension\,of\, \textit{G}}} \mid f\, \rm{\,is\, a\,nowhere}\textendash \rm{zero}\,\,\emph{K} \textendash{tension\,with\,respect\,to} \,\mathcal{C}\}.$$
Then~$F_{\mathcal{C}}$ is not empty for any finite subset~$\mathcal{C}$ of~$\overrightarrow{\mathcal{C}}_{\rm{fin}}(G)$. 
So using an analogous method as in the proof of Theorem \ref{compact},  we  conclude that~$G$ has a non-elusive~$K$-tension.
}\end{proof}
\noindent{\bf Acknowledgements.}  The authors are deeply grateful to the referees for careful reading.
Also the authors are grateful to Pascal Gollin, Matthias Hamann and Peter Christian Heinig for their comments. 



{}

\end{document}